\theoremstyle{plain}
\newtheorem{thm}{Theorem}[section]
\newtheorem{lem}[thm]{Lemma}
\newtheorem{pro}[thm]{Proposition}
\newtheorem{cor}[thm]{Corollary}
\newtheorem{theoremalph}{Theorem}
\newtheorem{theorema}[theoremalph]{THEOREM}
\newtheorem{conjecturea}[theoremalph]{CONJECTURE}
\theoremstyle{definition}
\numberwithin{equation}{section}
\newcommand{\Ker}{\operatorname{Ker}}
\newcommand{\Irr}{\operatorname{Irr}}
\newcommand{\SL}{\operatorname{SL}}
\newcommand{\FF}{\mathbb{F}}
\newcommand{\sC}{\mathsf{C}}
\newcommand{\GL}{\operatorname{GL}}
\newcommand{\GU}{\operatorname{GU}}
\newcommand{\SU}{\operatorname{SU}}
\def\Q{{\mathbb Q}}
\def\irr#1{{\rm Irr}(#1)}
\def\syl#1#2{{\rm Syl}_#1(#2)}
\def\nor{\trianglelefteq\,}
\def\det#1{{\rm det}(#1)}
\def\sbs{\subseteq}
\newcommand{\Gal}{\mathrm{Gal}}
\newcommand{\Sym}{{\mathrm {Sym}}}
\newcommand{\Alt}{{\mathrm {Alt}}}
\newcommand{\CC}{{\mathbb C}}
\newcommand{\QQ}{{\mathbb Q}}
\newcommand{\ZZ}{{\mathbb Z}}
\newcommand{\ZB}{{\mathbf Z}}
\newcommand{\OB}{{\mathbf O}}
\newcommand{\EE}{{\mathbb E}}
\newcommand{\OC}{{\mathcal O}}
\newcommand{\lam}{\lambda}
\makeatletter \@namedef{subjclassname@2020}{\textup{2020}
Mathematics Subject Classification} \makeatother
\begin{document}
\title{Primes and The Field of Values of Characters}

\author[Nguyen N. Hung]{Nguyen N. Hung}
\address[Nguyen N. Hung]{Department of Mathematics, The University of Akron, Akron,
OH 44325, USA}
\email{hungnguyen@uakron.edu}

\author[Gabriel Navarro]{Gabriel Navarro}
\address[Gabriel Navarro]{Departament de Matem\`atiques, Universitat de Val\`encia, 46100 Burjassot,
Val\`encia, Spain}
\email{gabriel@uv.es}

\author[Pham Huu Tiep]{Pham Huu Tiep}
\address[Pham Huu Tiep]{Department of Mathematics, Rutgers University, Piscataway, NJ 08854, USA}
\email{tiep@math.rutgers.edu}

\thanks{The first author gratefully acknowledges the support of the AMS-Simons Research Enhancement
Grant (AWD-000167 AMS). The work of the second author is
supported by Grant PID2022-137612NB-I00 funded by MCIN/AEI/
10.13039/501100011033 and ERDF ``A way of making Europe."
 The third author gratefully acknowledges the support of the NSF (grant
 DMS-2200850) and the Joshua Barlaz Chair in Mathematics.}
\keywords{Character, field of values, conductor}

\dedicatory{To J. L. Alperin, in memoriam}

\subjclass[2020]{Primary 20C15; Secondary 20C30, 20C33}

\begin{abstract}
Let $p$ be a prime. For $p=2$, the fields of values of the complex
irreducible characters of finite groups whose degrees are not
divisible by $p$ have been classified; for odd primes $p$, a
conjectural classification has been proposed. In this work, we
extend this conjecture to characters whose degrees are divisible by
arbitrary powers of $p$, and we provide some evidence supporting
its validity.
\end{abstract}

\maketitle

\section{Introduction}

In \cite{ILNT}, Isaacs et al.\ discovered an unexpected connection
between the irrationalities of an irreducible character of a finite
group and the parity of its degree: if $\chi$ is an irreducible
character of odd degree and its field of values is $\QQ(\sqrt{d})$
for some square-free integer $d \neq -1$, then $d \equiv 1
\pmod{4}$. (Here, the field of values of $\chi$, denoted
$\QQ(\chi)$, is the smallest extension of $\QQ$ containing all
values of $\chi$.) This result was later generalized to arbitrary higher
irrationalities by the second and third authors in \cite{Navarro-Tiep21},
providing a complete classification of the fields of values of the
irreducible characters of odd degree. For odd primes $p$, this
description is conjectured,  remains open,  and is reduced in
\cite{Navarro-Tiep21} to a problem on quasi-simple groups.

Needless to say, the focus so far has been on characters of degree
not divisible by $p$ (or on height zero characters \cite{NRTV}, if one
introduces Brauer $p$-blocks), largely motivated by the
Galois--McKay conjecture (\cite{N04}) and its possible extensions.
An apparently innocent question raised in \cite{IN} (Question~D),
however, has shifted this perspective: could the results of
\cite{Navarro-Tiep21} also be extended to characters of degree divisible
by $p$?  Given a prime $p$, what are the field of values of the
characters divisible by a given power of $p$? The most natural
setting to look for examples and patterns is when $p=2$ in quadratic
fields $\Q(\sqrt d)$, where $d$ is a square-free integer. In all the cases
that we have checked, we observe, for instance, that if
$\Q(\chi)=\Q(\sqrt d)$ with even $d \ne \pm 2$, then $\chi(1)$ is
divisible by $4$.

Our first result solves one direction of the classification of the
abelian number fields that can occur as fields of values of
irreducible characters whose degree has a given $p$-part. Throughout
this paper, let $\nu_p$ denote the $p$-adic valuation function on
the integers; that is, $\nu_p(n)=a$ if $p^a$ is the largest power of
$p$ dividing $n$. If $\FF/\QQ$ is an abelian extension, recall that
the conductor $c(\FF)$ is the smallest integer $n\ge 1$ such that $\FF$ is
contained in the $n$-th cyclotomic field $\QQ_n:=\QQ(e^{2\pi i/n})$.
Also, if $m$ is any positive integer, we  let $\FF_m:=\FF(e^{2\pi i/m})$.
For a finite group $G$ and $\chi \in \irr G$, we define $c(\chi)$ to
be the conductor of the field $\Q(\chi)$.

\begin{theorema}\label{thm:main1}
Let $p$ be a prime, $\FF$ an abelian extension of $\QQ$, and
$a:=\nu_p(c(\FF))$. Then, for every integer $b\geq
\nu_p([\FF_{p^a}: \FF])$, there exist a finite group $G$ and
$\chi\in\Irr(G)$ such that $\QQ(\chi)=\FF$ and $\nu_p(\chi(1))=b$.
\end{theorema}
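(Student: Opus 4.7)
The plan is to produce, for each $b \geq b_0 := \nu_p([\FF_{p^a}:\FF])$, a finite group $G$ and irreducible character $\chi$ with $\QQ(\chi) = \FF$ and $\nu_p(\chi(1)) = b$. My strategy is first to build a base example $\chi_0$ attaining $\nu_p(\chi_0(1)) = b_0$, and then to inflate to higher $b$ via outer tensor products with rational characters of prescribed $p$-power degree.

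By Kronecker-Weber, $\FF \subseteq \QQ_n$ for $n = c(\FF)$, and writing $n = p^a m$ with $\gcd(m, p) = 1$ yields the decomposition $(\ZZ/n\ZZ)^* \cong (\ZZ/p^a\ZZ)^* \times (\ZZ/m\ZZ)^*$. Set $H := \Gal(\QQ_n/\FF)$, let $\pi_1 : H \to (\ZZ/p^a\ZZ)^*$ denote the first projection, and let $K$ denote the kernel of $\pi_1$. The Galois correspondence gives $[\FF_{p^a}:\FF] = |\pi_1(H)|$, so $b_0 = \nu_p(|\pi_1(H)|)$. For the base construction, the semidirect product $G_0 = C_n \rtimes H$ together with $\chi_0 = \lambda^{G_0}$, where $\lambda$ is a faithful linear character of $C_n$, is irreducible of degree $|H| = |K| \cdot |\pi_1(H)|$ with $\QQ(\chi_0) = \FF$. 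This already attains $b_0$ whenever $|K|$ is coprime to $p$. In the remaining cases I plan to decompose $\FF$ as a compositum of simpler subfields aligned with the $p$- and $p'$-parts of $\Gal(\FF/\QQ)$ and to construct $\chi_0$ as an outer tensor product of characters of smaller auxiliary groups, invoking irreducible characters of quasi-simple groups (for instance, Weil-type characters of $\SL_2(q)$, whose fields of values realise quadratic subfields of $\FF$ with favorable $p$-adic valuation of degree).

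The inflation step is straightforward: given a base $\chi_0$ with $\QQ(\chi_0) = \FF$ and $\nu_p(\chi_0(1)) = b_0$, and any $b \geq b_0$, I will choose a rational-valued irreducible character $\psi$ of some finite group $L$ with $\psi(1) = p^{b - b_0}$---for instance, the Steinberg character of $\SL_2(p^{b - b_0})$, which has degree $p^{b - b_0}$ and is rational. The outer tensor product $\chi := \chi_0 \boxtimes \psi \in \Irr(G_0 \times L)$ then satisfies $\QQ(\chi) = \QQ(\chi_0) \cdot \QQ(\psi) = \FF$ and $\nu_p(\chi(1)) = \nu_p(\chi_0(1)) + (b - b_0) = b$, as required. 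The hard part of the proof will be the base construction when $|K|$ carries non-trivial $p$-part: here the standard semidirect product is suboptimal, and a delicate analysis---combining Galois-theoretic decompositions of $\FF$ with well-chosen characters of auxiliary quasi-simple groups---is needed to secure $\chi_0$ at the optimal valuation $b_0$.
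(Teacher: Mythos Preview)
Your inflation step is fine and matches the paper's approach (the paper uses Lemma~\ref{lem:4} in place of the Steinberg character, but either works). The genuine gap is in your base construction.

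Your semidirect-product construction $G_0=C_n\rtimes H$ with $\chi_0=\lambda^{G_0}$ is exactly Lemma~\ref{FG72}, and it produces $\chi_0$ of degree $|H|$ with $\QQ(\chi_0)=\FF$. As you observe, $\nu_p(|H|)=\nu_p(|K|)+b_0$, so this hits $b_0$ only when $p\nmid|K|$. For the remaining case your plan---decompose $\FF$ as a compositum and bring in Weil characters of $\SL_2(q)$---is too vague to count as a proof. Weil characters of $\SL_2(q)$ realise only quadratic fields, and you give no mechanism for assembling characters with field an arbitrary abelian $\FF$ while controlling the $p$-valuation of the degree to be \emph{exactly} $b_0$. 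A concrete instance where a new idea is needed is $p=2$, $\FF=\QQ(\sqrt{2},\sqrt{5})$: here $c(\FF)=40$, $a=3$, $b_0=1$, and $|K|=2$, so your direct construction gives $\nu_2(\chi_0(1))=2$, not $1$.

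The paper sidesteps this difficulty with an ``up then down'' strategy. First pass to $\EE:=\FF_{p^a}$; since $\QQ_{p^a}\subseteq\EE$, Theorem~\ref{NT21B2} (Navarro--Tiep's Theorem~B2) supplies a finite group and a character $\psi$ of $p'$-degree with $\QQ(\psi)=\EE$. The descent from $\EE$ back to $\FF$ is handled by the new Theorem~\ref{arbitrary-extension}: given any $\psi$ with $\QQ(\psi)=\EE$ and any subfield $\FF\subseteq\EE$, a generalised wreath-product construction over $\Gal(\EE/\FF)$ produces $\varphi$ with $\QQ(\varphi)=\FF$ and $\varphi(1)=n\,\psi(1)^n$, where $n=[\EE:\FF]$. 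Since $\psi(1)$ is coprime to $p$ and $n=p^{b_0}f$ with $p\nmid f$, one obtains $\nu_p(\varphi(1))=b_0$ on the nose, with no case analysis on $K$ whatsoever. This wreath-product descent (extending \cite[Theorem~3.3]{Navarro-Tiep21} from cyclic to arbitrary abelian $\Gal(\EE/\FF)$) is the key technical ingredient you are missing.
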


To complete the classification, we would need to prove the
following, which extends the still unproved Conjecture~B.3 of
\cite{Navarro-Tiep21} on characters of degree not divisible by $p$
to arbitrary characters.

\begin{conjecturea}\label{conj:main} Let $\chi$ be an irreducible character
of some finite group such that  $a:=\nu_p(c(\chi))\ge 1$. Then
$\chi(1)$ is divisible by $[\QQ_{p^a}(\chi): \QQ_p(\chi)]$.
\end{conjecturea}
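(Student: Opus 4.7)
Writing $F := \QQ(\chi)$, the quantity $[\QQ_{p^a}(\chi):\QQ_p(\chi)] = [F(\zeta_{p^a}):F(\zeta_p)]$ divides $[\QQ_{p^a}:\QQ_p] = p^{a-1}$, hence is a $p$-power. Conjecture~A is therefore equivalent to
$$
\nu_p\bigl([\QQ_{p^a}(\chi):\QQ_p(\chi)]\bigr) \le \nu_p(\chi(1)),
$$
which is trivial for $a = 1$. When $p \nmid \chi(1)$, the inequality collapses to the assertion $\zeta_{p^a} \in \QQ_p(\chi)$, which is precisely Conjecture~B.3 of \cite{Navarro-Tiep21}. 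Thus Conjecture~A strictly extends B.3, and any complete proof must subsume the latter.

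\noindent\textbf{Reduction to the almost quasi-simple case.}
The plan is to argue by contradiction with $(G,\chi)$ a counterexample minimizing $|G|$. Inflation through $\Ker(\chi)$ shows $\chi$ is faithful. For any nontrivial normal subgroup $N \lhd G$, Clifford theory splits into two cases. If $\chi_N$ is not homogeneous, then $\chi = \psi^G$ for some $\psi \in \Irr(I_G(\theta))$ lying above a constituent $\theta$ of $\chi_N$; here $\QQ(\chi) \subseteq \QQ(\psi)$ forces $\nu_p(c(\psi)) \ge a$, and combined with $\chi(1) = [G:I_G(\theta)]\,\psi(1)$ and the induction hypothesis for $\psi$, this yields a contradiction. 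If $\chi_N$ is homogeneous, a character-triple replacement in the sense of Dade allows one to pass to a smaller triple preserving both $F$ and $\nu_p(\chi(1))$. Iterating, $\chi$ becomes quasi-primitive, and via the classification of finite simple groups applied to the generalized Fitting subgroup one reduces to $G$ almost quasi-simple, paralleling the reduction of Conjecture~B.3 in \cite{Navarro-Tiep21}.

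\noindent\textbf{The almost quasi-simple case.}
With $G$ almost quasi-simple of socle involving a simple group $S$: for $S$ alternating or sporadic (and their covers), Conjecture~A should be verifiable by character-table inspection or by explicit combinatorial formulas (the Murnaghan--Nakayama rule for character values and the hook-length formula for degrees). The substantive case is $S$ a finite group of Lie type, where irreducible characters are parametrized by Lusztig's rational series and both $\nu_p(\chi(1))$ and $c(\chi)$ are controlled by well-understood invariants of the dual semisimple element, the associated torus, and the Frobenius twist. Conjecture~A should then follow from a uniform analysis of the Galois action on Lusztig series, in the spirit of the recent progress on the Galois--McKay conjecture by Malle, Navarro, Tiep, Schaeffer Fry, Ruhstorfer, and others.

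\noindent\textbf{Main obstacle.}
The difficulty is twofold. First, Conjecture~B.3 is still open for odd $p$, and its resolution on quasi-simple groups is a prerequisite---our approach cannot bypass it. Second, reducing the case $p \mid \chi(1)$ to B.3 appears to require controlling the Galois action on $\chi$ in terms of data local to a defect group $D$ of its $p$-block, in the spirit of the blockwise Galois--McKay conjecture: ideally, one would produce a Galois-equivariant bijection linking $\chi$ to a height-zero character with the same field of values, thereby absorbing the $p$-part of $\chi(1)$ into the index $[G:\N_G(D)]_p$ and reducing the problem to the $p'$-degree case of B.3. A bijection with the required joint control over $\QQ(\chi)$ and $\chi(1)_p$ is not available in general, and constructing one---even for a single family of blocks---is likely to be the chief remaining obstacle to a complete proof of Conjecture~A.
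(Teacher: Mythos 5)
This statement is Conjecture~A: it is labelled a conjecture in the paper and the paper does not prove it. What the paper does establish is Theorem~C, which verifies the conjecture for $p$-solvable groups (via $p$-special/$p'$-special factorization of quasi-primitive characters and Cram's lemma), for alternating groups (via the explicit conductor computation $\nu_p(c(\chi)) \le 1$), for $\GL_n(q)$ and $\GU_n(q)$ (via the Dipper--James/Lusztig parametrization and a careful Galois-orbit analysis), and for $\SL_n(q)$, $\SU_n(q)$ under a coprimality hypothesis (via a ``going-up/going-down'' result for normal subgroups of $p'$-index, Theorem~\ref{thm:ext}). Your proposal correctly identifies the statement as conjectural, correctly observes that the $p'$-degree case is exactly Conjecture~B.3 of \cite{Navarro-Tiep21}, and correctly flags that any complete proof must subsume that open problem.

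However, your proposed reduction has a genuine gap that goes beyond the acknowledged dependence on B.3. In the homogeneous case of Clifford theory you invoke ``a character-triple replacement in the sense of Dade'' that ``preserves both $F$ and $\nu_p(\chi(1))$.'' Standard character-triple isomorphisms do not control fields of values; the refined ``strong'' or ``central'' isomorphisms used in recent reduction theorems require delicate extra hypotheses, and even those are engineered to preserve a field-of-values condition at one specific level rather than both $\QQ(\chi)$ and $\nu_p(c(\chi))$ simultaneously. No such reduction to almost quasi-simple groups is established in this paper, and producing one would itself be a substantial theorem (the paper's Theorem~\ref{thm:ext} only moves up or down a normal subgroup of $p'$-index, which is much weaker). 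So your step ``iterating, $\chi$ becomes quasi-primitive, and one reduces to $G$ almost quasi-simple'' is not justified, and the paper does not attempt it. Your sketch is a reasonable roadmap for a future reduction theorem, but as a proof of the conjecture it has the same status as the paper's: incomplete by design.
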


A few remarks are in order. First, the degree of the extension
$\QQ_{p^a}(\chi)/ \QQ_p(\chi)$ can be any power of $p$ when $a$
varies. Indeed: on one hand, every abelian extension $\FF/\QQ$ can
arise as a character field (see Lemma~\ref{FG72}). On the other
hand, given a prime power $p^a$, elementary number theory shows that
there exists an extension $\FF\supseteq \QQ_{p}$ such that
$\nu_p(c(\FF))=a$ and $[\FF_{p^a}:\FF]=p^{a-1}$. Second, if
$\Q(\chi)=\Q(\sqrt d)$ with even $d \ne \pm 2$,  then
$c(\chi)=4|d|$. Hence $\nu_2(c(\chi))=3$ and $[\QQ_{2^3}(\chi):
\QQ(\chi)]=4$. This would explain our earlier observation on the
quadratic character fields and the divisibility by $4$ of character
degrees.
\smallskip

Since $[\QQ_p(\chi):\QQ(\chi)]=[\QQ_p:(\QQ_p\cap \QQ(\chi))]$, which
divides $[\QQ_p:\QQ]=p-1$, it follows that $[\QQ_{p^a}(\chi):
\QQ_p(\chi)]$ is precisely the $p$-part of $[\QQ_{p^a}(\chi):
\QQ(\chi)]$. Theorem~\ref{thm:main1} therefore implies the
following.

\begin{theorema}\label{thm:main2}
Let $\FF$ be an abelian extension of $\QQ$ and $a:=\nu_p(c(\FF))$.
Let $b\in\ZZ^{\geq 0}$. Assume that Conjecture~\ref{conj:main} holds
true. Then $\FF$ is the field of values of an irreducible character
$\chi$ of some finite group with $\nu_p(\chi(1))=b$ if and only if
\[ \nu_p([\FF_{p^a} : \FF])\leq b.\]
\end{theorema}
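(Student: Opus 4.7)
The plan is to recognize Theorem B as the straightforward conjunction of Theorem~\ref{thm:main1} (which supplies the existence direction unconditionally) and Conjecture~\ref{conj:main} (which, when assumed, yields the necessary divisibility), glued together by the $p$-part observation stated in the paragraph immediately preceding the theorem. In other words, my plan is to write Theorem B as a short corollary rather than a standalone argument, separating the two implications.

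For the ``if'' direction, I would simply appeal to Theorem~\ref{thm:main1}: under the hypothesis $\nu_p([\FF_{p^a}:\FF])\leq b$, that theorem already produces a finite group $G$ and $\chi\in\Irr(G)$ with $\QQ(\chi)=\FF$ and $\nu_p(\chi(1))=b$. Nothing beyond a citation is required here.

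For the ``only if'' direction, I would start with $\chi\in\Irr(G)$ satisfying $\QQ(\chi)=\FF$ and $\nu_p(\chi(1))=b$, and split on the value of $a=\nu_p(c(\FF))=\nu_p(c(\chi))$. The case $a=0$ is immediate since then $p^a=1$, hence $\FF_{p^a}=\FF$ and the inequality becomes $0\leq b$. For $a\geq 1$, I would invoke Conjecture~\ref{conj:main}: the index $[\QQ_{p^a}(\chi):\QQ_p(\chi)]$ divides $\chi(1)$. Because $\QQ(\chi)=\FF$, we have $\QQ_{p^a}(\chi)=\FF(e^{2\pi i/p^a})=\FF_{p^a}$, and by the remark preceding the statement the index $[\QQ_{p^a}(\chi):\QQ_p(\chi)]$ is precisely the $p$-part of $[\QQ_{p^a}(\chi):\QQ(\chi)]=[\FF_{p^a}:\FF]$. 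Taking $p$-adic valuations gives $\nu_p([\FF_{p^a}:\FF])\leq \nu_p(\chi(1))=b$, as needed.

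The ``main obstacle'' is therefore not located inside the proof itself: once Conjecture~\ref{conj:main} is granted, Theorem B is essentially a two-line deduction, and the only real content imported from outside (beyond Theorem~\ref{thm:main1}) is the elementary fact that $[\QQ_p(\chi):\QQ(\chi)]$ divides $p-1$, which forces $[\QQ_{p^a}(\chi):\QQ_p(\chi)]$ to be the full $p$-part of $[\FF_{p^a}:\FF]$. All genuine difficulty is absorbed into the conjecture, so the write-up can be kept to a brief paragraph.
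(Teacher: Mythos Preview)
Your proposal is correct and matches the paper's own treatment exactly: the paper does not give a separate proof of Theorem~\ref{thm:main2} but simply records, in the paragraph preceding it, that $[\QQ_{p^a}(\chi):\QQ_p(\chi)]$ is the $p$-part of $[\FF_{p^a}:\FF]$ and then states that Theorem~\ref{thm:main1} (for the ``if'' direction) together with Conjecture~\ref{conj:main} (for the ``only if'' direction) yield the result. Your explicit handling of the case $a=0$, which falls outside the hypothesis of Conjecture~\ref{conj:main}, is a small but appropriate addition.
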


The next result presents evidence in support of
Conjecture~\ref{conj:main}.

\begin{theorema}\label{thm:main-evidence} Let $\chi$ be an irreducible character
of some finite group $G$. Suppose $a:=\nu_p(c(\chi))\ge 1$. Then
$\chi(1)$ is divisible by $[\QQ_{p^a}(\chi): \QQ_p(\chi)]$ if one
the following cases occurs.
\begin{enumerate}[\rm(a)]
\item $G$ is $p$-solvable;
\item $G$ is an alternating group;
\item $G$ is a general linear or general unitary group.
\item $G$ is the special linear group $\SL_n(q)$ and $p \nmid \gcd(n,q-1)$.
\item $G$ is the special unitary group $\SU_n(q)$ and $p \nmid \gcd(n,q+1)$.
\end{enumerate}
\end{theorema}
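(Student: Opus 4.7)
My plan is to handle each of the five families (a)--(e) separately, using the character theory specific to that family.

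For case \textbf{(a)}, $p$-solvable groups, I would argue by induction on $|G|$ via Clifford theory applied to a minimal normal subgroup $N$. Since $G$ is $p$-solvable, $N$ is either an elementary abelian $p$-group or a $p'$-group. If $N$ is a $p'$-group, then each constituent $\theta$ of $\chi|_N$ has conductor coprime to $p$, hence is fixed by $\Gal(\QQ_{p^a}(\chi)/\QQ_p(\chi))$; the Clifford correspondent then descends to the stabilizer $I_G(\theta)$ without loss of $p$-adic information, and induction applies. If $N$ is a $p$-group, a Galois $p$-orbit on nontrivial constituents of $\chi|_N$ forces the index $[G:I_G(\theta)]$ (and hence $\chi(1)$) to absorb the required $p$-power, once the conductor of $\chi$ is compared against that of $\theta$.

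For case \textbf{(b)}, alternating groups, I would use that $\Irr(S_n)$ consists of rational-valued characters. Consequently any $\chi \in \Irr(A_n)$ with $c(\chi) > 1$ must arise from a self-conjugate partition $\lambda = \lambda'$ of $n$, occurring as one of a pair $\chi^\pm_\lambda$. The standard formula gives $\QQ(\chi^\pm_\lambda) = \QQ(\sqrt{\varepsilon h_1 h_2 \cdots h_d})$, where the $h_i$ are the principal diagonal hook lengths of $\lambda$ and $\varepsilon = \pm 1$ is determined by $n-d \pmod 4$. For odd $p$ this quadratic field has $\nu_p(c(\chi)) \le 1$, making the divisibility trivial (since $[\QQ_{p^a}(\chi) : \QQ_p(\chi)] = 1$ when $a \le 1$). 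For $p = 2$ I would compute $\nu_2([\QQ_{2^a}(\chi) : \QQ_2(\chi)])$ explicitly from the class of $\varepsilon h_1 \cdots h_d$ modulo small $2$-powers and compare it against $\nu_2(\chi^\pm_\lambda(1)) = \nu_2(n!/\prod_c h(c)) - 1$, obtained from the hook-length formula.

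For cases \textbf{(c)--(e)}, the linear and unitary groups, I would invoke Lusztig's Jordan decomposition: each $\chi$ is labeled by a pair $(s, \psi)$ with $s$ a semisimple element of the dual group $G^*$ and $\psi$ a unipotent character of $C_{G^*}(s)$. Since the unipotent characters of $\GL_n$ and $\GU_n$ are rational-valued, $\QQ(\chi)$ is controlled entirely by the semisimple part, and classical formulas (going back to Green and to Lusztig--Srinivasan) express both $c(\chi)$ and $\chi(1) = [G^*:C_{G^*}(s)]_{p'} \cdot \psi(1)$ combinatorially in terms of the multiplicative orders and Frobenius orbits of the eigenvalues of $s$. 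For $\SL_n(q)$ and $\SU_n(q)$ the hypothesis $p \nmid \gcd(n, q \mp 1)$ forces $\tilde G/G$ (with $\tilde G = \GL_n(q)$ or $\GU_n(q)$) to have order coprime to $p$, so Clifford theory between $G \le \tilde G$ preserves the $p$-parts of both conductor and degree, reducing (d) and (e) to (c).

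The principal obstacle lies in case (c): matching the $p$-part of $c(\chi)$, encoded in the multiplicative orders of the eigenvalues of $s$, with the $p$-part of $\chi(1) = [G^*:C_{G^*}(s)]_{p'} \cdot \psi(1)$ is delicate when $s$ has eigenvalues of high $p$-power order, so that large $p$-cyclotomic subfields appear in $\QQ(\chi)$ and must be absorbed through the prime-to-$p$ index together with $\psi(1)$.
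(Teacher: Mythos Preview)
Your outline has genuine gaps in three of the five cases.

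\textbf{Case (a).} Your minimal-normal-subgroup scheme does not address the quasi-primitive situation: if $\theta$ is $G$-invariant (so $I_G(\theta)=G$), your appeal to ``induction applies'' is empty, in both the $p'$-group and the $p$-group branches. The paper reduces to this quasi-primitive case via Cram's lemma and then invokes Isaacs' theory of special characters: a quasi-primitive $\chi$ in a $p$-solvable group factors as $\chi=\alpha\beta$ with $\alpha$ $p$-special and $\beta$ $p'$-special, and the uniqueness of this factorization forces $\QQ(\chi)=\QQ(\alpha,\beta)$ with $c(\alpha)$ a $p$-power and $c(\beta)$ coprime to $p$. A separate lemma about irreducible characters of $p$-groups then pins down $\QQ_{p^a}\cap\QQ(\alpha)$. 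Your plan has no substitute for this machinery.

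\textbf{Case (b).} For $p=2$ you propose a delicate comparison between $\nu_2(\chi(1))$ and $\nu_2$ of the index, but this is unnecessary and it is not clear it would succeed. The missing observation is that in a self-conjugate partition the diagonal hook lengths are $h_{ii}=2(\lambda_i-i)+1$, hence all odd; together with the sign $(-1)^{(n-k)/2}$ this forces $c(\chi)$ to be the odd square-free part of $\prod_i h_{ii}$. Thus $\nu_2(c(\chi))=0$ always, and the $p=2$ statement is vacuous. The paper establishes $\nu_p(c(\chi))\le 1$ for every prime $p$ this way.

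\textbf{Cases (d) and (e).} Your reduction is based on the claim that $p\nmid\gcd(n,q\mp 1)$ forces $|\tilde G/G|$ to be coprime to $p$, but $|\GL_n(q)/\SL_n(q)|=q-1$, not $\gcd(n,q-1)$. For instance with $n=3$, $q=5$, $p=2$ one has $\gcd(3,4)=1$ yet $|\tilde G/G|=4$. The paper instead passes through the intermediate subgroup $H=\ZB(\tilde G)S$, for which $|\tilde G/H|=\gcd(n,q-1)$ is indeed prime to $p$, and then proves (nontrivially, using the hypothesis again) that $H=\OB_{p'}(\ZB(\tilde G))S\times\OB_p(\ZB(\tilde G))$ is a direct product, allowing a second $p'$-index descent from $\OB_{p'}(\ZB(\tilde G))S$ to $S$. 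Both descents rely on a going-down result (Proposition on $p'$-extensions) that is itself a nontrivial ingredient.

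\textbf{Case (c).} Your Jordan-decomposition viewpoint is equivalent to the paper's Dipper--James parametrization $\chi=S(s_1,\lambda_1)\circ\cdots\circ S(s_m,\lambda_m)$, and you correctly flag this as the hard case. The paper carries out the combinatorics in detail: it tracks the Galois orbits on the pairs $(s_i,\lambda_i)$ and on the Frobenius orbits $[s_i]$, showing that an orbit of length $p^b$ on the former and $p^c$ on the latter forces $p^{b+c}\mid\chi(1)$ (with an extra factor of $2$ when $p=2$ and $b+c\ge 2$), and then uses the structure of $\Gal(\QQ_{p^a}/\QQ_p)$ to conclude.
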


Also see Theorem \ref{main-slu} for results concerning $p'$-extensions of $\SL_n(q)$ and $\SU_n(q)$.

Next we turn to the related problem of classifying the fields of
values of the irreducible characters of finite groups whose order
has a given $p$-part (in particular, groups of $p'$-order). This
leads us to the following basic problem: is it always true that
$c(\chi)[\Q_{c(\chi)} : \Q(\chi)]$ divides $|G|$? Interestingly, the
recently proposed {\sl inductive Feit condition} (\cite{BKNT25})
implies what seemed to be an intractable problem.

 \begin{theorema}\label{thm:main-p'-groups}
 Suppose that the finite simple groups satisfy the inductive Feit condition.
 Then the following hold:
 \begin{enumerate}[\rm(a)]
 \item If $G$ is a finite group and  $\chi \in \irr G$, then $c(\chi)[\Q_{c(\chi)} : \Q(\chi)]$ divides $|G|$.

 \item Let $\FF/\QQ$ be an abelian extension, let $p$ be a prime, and $b\in\ZZ_{\ge 0}$.
 Then $\FF=\Q(\chi)$ for some $\chi \in \irr G$, where $G$ is a finite group with $\nu_p(|G|)=b$, if and only if
\[\nu_p(c(\FF)[\QQ_{c(\FF)}:\FF])\le b.\]
 \end{enumerate}
 \end{theorema}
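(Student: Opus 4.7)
The plan for proving Theorem~D is to extract part~(a) from a strengthened form of Feit's conjecture that the inductive Feit condition of~\cite{BKNT25} is designed to deliver, and then to derive part~(b) by an explicit construction of small groups realising $\FF$.

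For part~(a), fix $\chi\in\Irr(G)$ with $c:=c(\chi)\ge 1$. I would aim to produce an element $g\in G$ of order $c$ satisfying the following Galois compatibility: for every $\sigma_k\in\Gal(\QQ_c/\QQ(\chi))$, viewed inside $(\ZZ/c\ZZ)^\times$, the power $g^k$ is $G$-conjugate to $g$. Letting $\Delta_g\le(\ZZ/c\ZZ)^\times$ denote the image of $N_G(\langle g\rangle)/C_G(g)$ under the canonical embedding into $\Aut(\langle g\rangle)$, this condition reads $\Gal(\QQ_c/\QQ(\chi))\subseteq \Delta_g$. Granting this, with $d:=[\QQ_c:\QQ(\chi)]$, we obtain $d\mid |\Delta_g|$ from the containment, while $c\mid |C_G(g)|$ since $\langle g\rangle\le C_G(g)$ has order $c$; hence
\[
cd \;\Big|\; |\Delta_g|\cdot|C_G(g)| = |N_G(\langle g\rangle)| \;\Big|\; |G|,
\]
establishing~(a). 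The hard part is producing such a $g$. I would tackle this by a minimal-counterexample argument: taking a minimal normal subgroup $N$ of $G$, applying the inductive Feit condition at the non-abelian composition factors of $N$, and transferring the Galois compatibility back to $G$ through Galois-equivariant Clifford/character-triple correspondences, in the spirit of the reductions used for the McKay and Galois--McKay conjectures.

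Part~(b) then follows readily. The ``only if'' direction is immediate from~(a) applied to any realisation $\FF=\QQ(\chi)$. For the converse, given $\FF$ with $c:=c(\FF)$ and $b\ge\nu_p(c[\QQ_c:\FF])$, identify $\Gamma:=\Gal(\QQ_c/\FF)$ with a subgroup of $\Aut(C_c)=(\ZZ/c\ZZ)^\times$ and form the semidirect product $G_0:=C_c\rtimes \Gamma$, of order $c[\QQ_c:\FF]$. For any faithful linear character $\lambda\in\Irr(C_c)$, the $\Gamma$-stabiliser of $\lambda$ is trivial (since $\Aut(C_c)$ acts freely on faithful linear characters of $C_c$), so $\chi_0:=\Ind_{C_c}^{G_0}\lambda$ is irreducible by Clifford theory and has $\QQ(\chi_0)=\FF$, in line with Lemma~\ref{FG72}. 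Finally, taking $G:=G_0\times P$ with $P$ an abelian $p$-group of order $p^{b-\nu_p(c[\QQ_c:\FF])}$ (non-negative by assumption) and $\chi:=\chi_0\otimes 1_P\in\Irr(G)$ yields $\QQ(\chi)=\FF$ and $\nu_p(|G|)=b$, as required.
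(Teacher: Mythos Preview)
Your treatment of part~(b) matches the paper's: both realise $\FF$ via the semidirect product $C_c\rtimes\Gal(\QQ_c/\FF)$ (the paper invokes this through Lemma~\ref{FG72}) and then adjoin a $p$-group direct factor to adjust $\nu_p(|G|)$.

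For part~(a), however, your route diverges from the paper's and leaves the main step unproved. You aim first to establish a strong element-wise form of Feit's conjecture---the existence of $g\in G$ of order $c=c(\chi)$ with $g^k\sim_G g$ for every $\sigma_k\in\Gal(\QQ_c/\QQ(\chi))$---and then deduce~(a) from the resulting inclusion $\Gal(\QQ_c/\QQ(\chi))\hookrightarrow N_G(\langle g\rangle)/C_G(g)$. That final deduction is correct, but producing such a $g$ is the entire content, and you only sketch a minimal-counterexample plan ``in the spirit of'' McKay-type reductions. This is not a minor omission: the Galois-compatible element statement is at least as strong as Feit's conjecture itself, and it is not evident that the inductive Feit condition, as formulated in~\cite{BKNT25}, delivers it directly.

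The paper sidesteps this completely. What~\cite{BKNT25} actually supplies (their Theorem~4.7, recorded here as Theorem~\ref{A}) is a \emph{character-theoretic} descent: for non-linear $\chi$ there exist $H<G$ and $\psi\in\Irr(H)$ with $\QQ(\chi)\subseteq\QQ(\psi)$ and $[\QQ(\psi):\QQ(\chi)]\mid|G:H|$. Part~(a) then follows by a short induction on $|G|$: for linear $\chi$ one has $c(\chi)[\QQ_{c(\chi)}:\QQ(\chi)]=c(\chi)$ dividing the exponent of $G$; for non-linear $\chi$ one applies the inductive hypothesis to $\psi$ in $H$, uses $c(\chi)\mid c(\psi)$, and splits $[\QQ_{c(\chi)}:\QQ(\chi)]$ at the intermediate field $K=\QQ_{c(\chi)}\cap\QQ(\psi)$, with $c(\chi)[\QQ_{c(\chi)}:K]$ dividing $|H|$ and $[K:\QQ(\chi)]$ dividing $|G:H|$. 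No witnessing element is ever constructed. Your approach, if completed, would give more information (an explicit $g$), but at the price of essentially reproving a reduction theorem; the paper's approach takes~\cite{BKNT25} as a black box and finishes in a few lines.
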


Theorems~\ref{thm:main1}, \ref{thm:main-evidence}, and
\ref{thm:main-p'-groups} are proved in Sections~\ref{sec:2},
\ref{sec:3}, and \ref{sec:4}, respectively.

%
%%%%%%%%%%%%%%%%%%%%%%%%%%%%%%%%%%%%%%%%%%%%%%%%%%%%%%%%%%%%%%
%%%%%%%%%%%%%%%%%%%%%%%%%%%%%%%%%%%%%%%%%%%%%%%%%%%%%%%%%%%%%%

\section{THEOREM \ref{thm:main1}}\label{sec:2}

Our notation is mostly standard and essentially follows \cite{Isaacs1}. We begin
by recalling a result of G.\,M. Cram that will be used frequently.

\begin{lem}\label{lem:Cram}
Let $G$ be a finite group and $N\nor G$. Let $\chi\in\Irr(G)$ and
$\theta$ be an irreducible constituent of $\chi_N$. Let $\psi$ be
the Clifford correspondent of $\chi$ with respect to $\theta$. Then
$\QQ(\chi)\subseteq\QQ(\psi)$ and $[\QQ(\psi):\QQ(\chi)]$ divides
$\chi(1)/\psi(1)$.
\end{lem}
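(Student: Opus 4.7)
The plan is to address the two conclusions in turn. Throughout put $T := I_G(\theta)$, so that $\psi \in \Irr(T)$, $\psi^G = \chi$, and $\chi(1)/\psi(1) = [G:T]$.

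For the inclusion $\QQ(\chi) \subseteq \QQ(\psi)$, I would invoke the standard formula for an induced character,
\[
\chi(g) \;=\; \frac{1}{|T|} \sum_{\substack{x \in G \\ xgx^{-1} \in T}} \psi(xgx^{-1}),
\]
which exhibits every value of $\chi$ as a $\QQ$-linear combination of values of $\psi$; hence $\QQ(\chi) \subseteq \QQ(\psi)$.

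The substantive part is the divisibility $[\QQ(\psi):\QQ(\chi)] \mid [G:T]$. Put $K := \Gal(\QQ_{|G|}/\QQ(\chi))$ and study the Galois orbit of $\psi$. For $\sigma \in K$, the character $\psi^\sigma \in \Irr(T)$ induces to $\chi^\sigma = \chi$ and lies over $\theta^\sigma$; because Galois action and $G$-conjugation on characters commute, $I_G(\theta^\sigma) = T$. Since $\theta^\sigma$ is a constituent of $\chi_N$, there is some $g_\sigma \in G$ with $\theta^{g_\sigma} = \theta^\sigma$, and necessarily $g_\sigma \in \N_G(T)$. The uniqueness in Clifford correspondence then forces $\psi^\sigma = \psi^{g_\sigma}$, and the stabilizer computation $\Stab_{\N_G(T)}(\psi) = T$ shows that $g_\sigma$ is well defined modulo $T$.

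To conclude, I would verify, using that Galois action commutes with conjugation on characters, that $\sigma \mapsto g_\sigma T$ is a group homomorphism $\Phi : K \to \N_G(T)/T$ whose kernel is $\Gal(\QQ_{|G|}/\QQ(\psi))$. The image is then a subgroup of $\N_G(T)/T$ of order $[\QQ(\psi):\QQ(\chi)]$, and Lagrange's theorem yields
\[
[\QQ(\psi):\QQ(\chi)] \,\Bigm|\, [\N_G(T):T] \,\Bigm|\, [G:T] = \chi(1)/\psi(1).
\]
The main obstacle will be the careful verification that $\Phi$ is well defined and multiplicative; once this is established, the desired divisibility follows from Lagrange applied to the image subgroup, together with the trivial fact that $[\N_G(T):T]$ divides $[G:T]$.
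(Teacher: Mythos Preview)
Your argument is correct. The paper does not actually prove this lemma; it simply cites \cite[Lemma~1.1]{Cram88}. What you have written is essentially Cram's own argument: realise the Galois orbit of $\psi$ under $K=\Gal(\QQ_{|G|}/\QQ(\chi))$ inside $\N_G(T)/T$ via $\sigma\mapsto g_\sigma T$, identify the kernel as $\Gal(\QQ_{|G|}/\QQ(\psi))$, and apply Lagrange. So you are supplying the proof that the paper defers to the literature, by the same route.

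One small point to watch in the verification you flagged: with the usual right conjugation action $\theta^g(n)=\theta(gng^{-1})$ (so that $(\theta^g)^h=\theta^{gh}$), the computation $(\psi^\sigma)^\tau=(\psi^{g_\sigma})^\tau=(\psi^\tau)^{g_\sigma}=\psi^{g_\tau g_\sigma}$ makes $\Phi$ an \emph{anti}-homomorphism rather than a homomorphism. This is harmless for your purposes---either note that $K$ is abelian, or replace $\Phi$ by $\sigma\mapsto\Phi(\sigma)^{-1}$, which is then a genuine homomorphism with the same image and kernel---so the image is still a subgroup of $\N_G(T)/T$ and the divisibility conclusion follows exactly as you say.
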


\begin{proof}
This follows from \cite[Lemma~1.1]{Cram88}.
\end{proof}

The next lemma easily implies, using direct products, that if $\FF$
is the field of values of an irreducible character $\chi$ with
$\chi(1)_p=p^b$ for $b\in\ZZ_{\ge 0}$, then $\FF$ also arises as the
field of values of a character whose degree has any $p$-part at
least $p^b$. Therefore, to classify the fields of values of
irreducible characters whose degree has a prescribed $p$-part, the
question we really have to solve is: \emph{given an abelian number
field $\FF$, what is the smallest possible $p$-part of the degree of
an irreducible character $\chi$ such that $\QQ(\chi)=\FF$}?

\begin{lem}\label{lem:4}
Let $d\in\ZZ_{\geq 1}$. Then there exists a solvable group $G$ and a
rational-valued character $\chi\in\Irr(G)$ such that $\chi(1)=d$.
\end{lem}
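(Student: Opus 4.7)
The plan is to reduce to prime-power degrees via direct products and then handle the $2$-power case and the odd case by explicit constructions. Since the external tensor product of rational-valued irreducible characters of solvable groups is again a rational-valued irreducible character of a solvable group, it suffices, after writing $d=2^k m$ with $m$ odd, to treat the two cases $d$ odd and $d$ a power of $2$ separately and then combine.

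For odd $d$, I would use a Frobenius construction over $\FF_2$. Since $\gcd(d,2)=1$, the multiplicative order of $2$ modulo $d$ is some $n\ge 1$, so $d\mid 2^n-1$. Set $q=2^n$, let $V=(\FF_q,+)$, and let $H\le \FF_q^\times$ be the unique subgroup of order $d$; then $G=V\rtimes H$ (with $H$ acting by multiplication) is a solvable Frobenius group with kernel $V$. For any nontrivial $\lambda\in\Irr(V)$, the inertia group of $\lambda$ in $G$ is just $V$, so Clifford theory implies $\chi:=\lambda^G\in\Irr(G)$ has degree $|H|=d$. The character $\chi$ vanishes off $V$ (standard for Frobenius groups), and on $v\in V$ satisfies
\[
\chi(v)=\sum_{\mu\in H\cdot\lambda}\mu(v),
\]
which is a sum of values in $\{\pm 1\}$, hence an integer. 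Thus $\chi$ is rational-valued.

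For $d=2^k$ with $k\ge 0$, I would take $E$ to be an extraspecial $2$-group of order $2^{1+2k}$ (say of plus type). Such a group is nilpotent, hence solvable, and a dimension count via $|E/[E,E]|=|E/\ZB(E)|=2^{2k}$ together with $\sum d_i^2=|E|$ shows that $E$ has a unique nonlinear irreducible character $\psi$, of degree $2^k$, taking values $\pm 2^k$ on $\ZB(E)$ and $0$ elsewhere; in particular $\psi$ is rational-valued.

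Finally, writing $d=2^k m$ with $m$ odd and taking the external tensor product of the characters produced in the two steps yields a rational-valued irreducible character of degree $d$ of the solvable direct-product group, completing the argument. The main subtlety is ensuring rationality in the Frobenius step; this forces the underlying prime of $V$ to be $2$ (so that all linear characters of $V$ take values in $\{\pm 1\}$), and is precisely why the $2$-power part of $d$ has to be handled by the separate extraspecial construction.
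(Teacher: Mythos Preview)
Your argument is correct and self-contained, whereas the paper does not actually prove this lemma but simply cites \cite[Lemma~2.1]{HungTiep23}. Your two-step construction (Frobenius group over $\FF_2$ for the odd part, extraspecial $2$-group for the $2$-part, then direct product) is exactly the kind of explicit build one would expect, and the rationality check in each case is right: characters of an elementary abelian $2$-group take values in $\{\pm 1\}$, and the unique faithful irreducible of an extraspecial $2$-group vanishes off the center and equals $\pm 2^k$ on it.

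One cosmetic remark: you write ``for $d=2^k$ with $k\ge 0$'', but extraspecial $2$-groups only exist for $k\ge 1$. This is harmless, since for $k=0$ the trivial character of the trivial group does the job (and then the odd case alone covers all odd $d$).
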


\begin{proof}
This is \cite[Lemma~2.1]{HungTiep23}.
\end{proof}

We also need the following.

\medskip

\begin{lem}\label{FG72} Let $n\in\ZZ_{\geq 1}$ and $\FF\supseteq \QQ$ be a subfield of
$\QQ_n$. Then there exists an irreducible character $\chi$ (of some
finite solvable group) such that $\QQ(\chi)=\FF$ and
$\chi(1)=[\QQ_n:\FF]$.
\end{lem}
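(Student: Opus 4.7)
The plan is a direct semidirect-product construction. Let $A = \langle a \rangle$ be cyclic of order $n$, and identify $\Aut(A) = (\ZZ/n\ZZ)^\times$ with $\Gal(\QQ_n/\QQ)$ via the standard isomorphism $k \mapsto \sigma_k$ where $\sigma_k(\zeta_n) = \zeta_n^k$. Let $H \le \Aut(A)$ be the subgroup corresponding under this identification to $\Gal(\QQ_n/\FF)$, so that $|H| = [\QQ_n : \FF]$, and set $G := A \rtimes H$. Both $A$ and $H$ are abelian, so $G$ is metabelian and hence solvable.

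Next, take the faithful linear character $\lambda \in \Irr(A)$ defined by $\lambda(a) = \zeta_n$. The induced $H$-action on $\Irr(A)$ is dual to the action on $A$, and faithfulness of $\lambda$ forces $\Stab_H(\lambda) = 1$: if $\lambda^h = \lambda$, then $\lambda(h^{-1}(x)) = \lambda(x)$ for every $x \in A$, and injectivity of $\lambda$ gives $h = 1$. The inertia group $I_G(\lambda)$ therefore equals $A$, so by Clifford correspondence $\chi := \lambda^G$ is irreducible with
\[ \chi(1) = [G:A] = |H| = [\QQ_n : \FF]. \]

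It remains to show $\QQ(\chi) = \FF$. Since $H$ is abelian, conjugation in $G$ preserves the $H$-component, so elements outside $A$ are never conjugate into $A$ and $\chi$ vanishes on $G \setminus A$. For $a^k \in A$, one computes
\[ \chi(a^k) = \sum_{h \in H} \lambda\!\left(h^{-1}(a^k)\right) = \sum_{h \in H} \sigma_{h^{-1}}(\zeta_n^k) = \Tr_{\QQ_n/\FF}(\zeta_n^k), \]
after reindexing $h \mapsto h^{-1}$ and identifying $H$ with $\Gal(\QQ_n/\FF)$. This immediately gives $\QQ(\chi) \subseteq \FF$. For the reverse containment, the powers $\{\zeta_n^k\}_{0 \le k < n}$ span $\QQ_n$ over $\QQ$, so by $\QQ$-linearity and surjectivity of $\Tr_{\QQ_n/\FF}$ in the separable extension $\QQ_n/\FF$, the values $\Tr_{\QQ_n/\FF}(\zeta_n^k)$ span $\FF$ as a $\QQ$-vector space. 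Hence $\FF \subseteq \QQ(\chi)$.

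The construction itself is routine; the one point deserving genuine attention is the reverse containment $\FF \subseteq \QQ(\chi)$, which rests on the surjectivity of the trace in the Galois extension $\QQ_n/\FF$ rather than on any special feature of the group $G$.
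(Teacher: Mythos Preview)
Your construction is correct and is essentially the classical one: the paper does not actually prove this lemma but simply cites \cite{FG72} and \cite[Theorem~2.2]{Navarro-Tiep21}, and the argument in those references is precisely the semidirect product $G=\sC_n \rtimes \Gal(\QQ_n/\FF)$ together with the induced character $\lambda^G$ that you build. Your verification that $\QQ(\chi)=\FF$ via the trace is the standard way to finish.
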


\begin{proof} This is \cite[Theorem 2.2]{FG72}. See also
\cite[Theorem 2.2]{Navarro-Tiep21}.
\end{proof}

For solvable groups, Theorem~\ref{thm:main2} follows from known
results.

\begin{thm}\label{thm:solvable} Let $\FF$ be an abelian extension of $\QQ$ and $b\in\ZZ_{\geq 0}$. Then
$\FF$ is the field of values of an irreducible character $\chi$ of a
finite solvable group with $\nu_p(\chi(1))=b$ if and only if
\[ \nu_p([\QQ_{c(\FF)}:\FF])\leq b.\]
\end{thm}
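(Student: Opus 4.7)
The plan is to prove the two implications of Theorem~\ref{thm:solvable} separately. For the ``if'' direction I will explicitly construct the desired character using Lemmas~\ref{FG72} and~\ref{lem:4}; for the ``only if'' direction I will invoke the classical divisibility that $[\QQ_{c(\chi)}:\QQ(\chi)]$ divides $\chi(1)$ whenever $G$ is solvable and $\chi\in\Irr(G)$.

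For the ``if'' direction, set $c := \nu_p([\QQ_{c(\FF)}:\FF])$ and assume $c \leq b$. Apply Lemma~\ref{FG72} with $n = c(\FF)$ to obtain a solvable group $G_0$ and $\chi_0 \in \Irr(G_0)$ with $\QQ(\chi_0) = \FF$ and $\chi_0(1) = [\QQ_{c(\FF)}:\FF]$; in particular $\nu_p(\chi_0(1)) = c$. If $c = b$, take $\chi := \chi_0$. Otherwise, Lemma~\ref{lem:4} with $d = p^{b-c}$ produces a solvable group $H$ and a rational-valued $\psi \in \Irr(H)$ of degree $p^{b-c}$, so the outer tensor product $\chi := \chi_0 \times \psi \in \Irr(G_0 \times H)$ has field of values $\QQ(\chi_0)\cdot \QQ(\psi) = \FF$ and $\nu_p(\chi(1)) = c + (b - c) = b$, as required.

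For the ``only if'' direction, suppose $G$ is solvable, $\chi \in \Irr(G)$, $\QQ(\chi) = \FF$, and $\nu_p(\chi(1)) = b$. Since $c(\chi) = c(\FF)$, the goal reduces to $\nu_p([\QQ_{c(\chi)}:\QQ(\chi)]) \leq b$. Granting the classical divisibility $[\QQ_{c(\chi)}:\QQ(\chi)] \mid \chi(1)$ for solvable groups, applying $\nu_p$ concludes. The main obstacle is justifying this divisibility: it is strictly stronger than what Theorem~\ref{thm:main-evidence}(a) supplies, which only controls the $p$-part of $[\QQ_{p^a}(\chi):\QQ(\chi)]$; for primes $\ell \ne p$ dividing $c(\chi)$ the cyclotomic extension $\QQ_{c(\chi)}/\QQ_{p^a}(\chi)$ may itself contribute a $p$-part through $\phi(\ell^{a_\ell})$. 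This classical statement is extracted from the literature (see \cite{Navarro-Tiep21} and the references therein). Alternatively, it can be reproved by induction on $|G|$ via Clifford theory: choose $N \trianglelefteq G$ minimal normal, hence elementary abelian by solvability. In the inhomogeneous case, the Clifford correspondent $\psi$ over a proper stabilizer $T < G$ satisfies, by Lemma~\ref{lem:Cram}, $\QQ(\chi) \subseteq \QQ(\psi)$ and $[\QQ(\psi):\QQ(\chi)] \mid \chi(1)/\psi(1)$; since $c(\chi) \mid c(\psi)$, combining with the inductive hypothesis applied to $\psi$ yields $[\QQ_{c(\chi)}:\QQ(\chi)] \mid [\QQ_{c(\psi)}:\QQ(\psi)] \cdot [\QQ(\psi):\QQ(\chi)] \mid \chi(1)$. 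The homogeneous case either descends to $G/N$ (when $N$ lies in the kernel of $\chi$) or reduces via Gallagher's theorem and a character-triple isomorphism to a smaller group; this last projective reduction is the technical heart of the argument.
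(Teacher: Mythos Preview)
Your argument is correct and follows essentially the same route as the paper: the ``if'' direction is identical (Lemma~\ref{FG72} plus Lemma~\ref{lem:4} and a direct product), and the ``only if'' direction rests on the same divisibility $[\QQ_{c(\chi)}:\QQ(\chi)]\mid\chi(1)$ for solvable $G$. The paper simply cites this as Cram's theorem \cite[Theorem~0.1]{Cram88} rather than pointing vaguely to \cite{Navarro-Tiep21}; your alternative inductive sketch is unnecessary and, as you yourself note, leaves the homogeneous (character-triple) case unfinished, so it should be dropped in favor of the direct citation.
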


\begin{proof}
The `only if' implication directly follows from Cram's theorem
(\cite[Theorem 0.1]{Cram88}). For the reverse implication, let
$a:=\nu_p([\QQ_{c(\FF)}:\FF])\leq b$. By Lemma \ref{FG72}, there
exists a solvable group $H$ and a character $\psi\in\Irr(H)$ such
that $\QQ(\psi)=\FF$ and $\psi(1)=[\QQ_{c(\FF)}:\FF]$. On the other
hand, by Lemma \ref{lem:4}, there exists a solvable group $K$ and a
rational-valued character $\varphi\in\Irr(K)$ such that
$\varphi(1)=p^{b-a}$.

Let $G:=H\times K$ and $\chi:=\psi\otimes \varphi\in\Irr(G)$. We
have $\QQ(\chi)=\QQ(\psi,\varphi)=\QQ(\psi)=\FF$ and
$\nu_p(\chi(1))=\nu_p(\psi(1)\varphi(1))=\nu_p(\psi(1))+\nu_p(\varphi(1))=a+(b-a)=b$.
\end{proof}

The next result we require is the $p'$-degree case of
Theorem~\ref{thm:main1}.

\begin{thm}[\cite{Navarro-Tiep21}, Theorem B2]\label{NT21B2}
Let $\FF$ be an abelian extension of $\QQ$ with $\nu_p(c(\FF))=a$.
Suppose that $[\QQ_{p^a}:(\FF\cap \QQ_{p^a})]$ is not divisible by
$p$. Then there exists a finite group $G$ and $\chi\in\Irr_{p'}(G)$
such that $\QQ(\chi)=\FF$.
\end{thm}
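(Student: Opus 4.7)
The approach is to use Galois theory to decompose $\FF$ into a ``$p$-part'' and a ``$p'$-part'', realize each by a $p'$-degree character, then recombine.

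\textbf{Setup.} Write $n := c(\FF) = p^a m$ with $\gcd(p,m)=1$, identify $\Gal(\QQ_n/\QQ) = (\ZZ/p^a)^{\times} \times (\ZZ/m)^{\times}$, and let $H := \Gal(\QQ_n/\FF)$ with projections $H^{(1)}, H^{(2)}$ to the two factors. The hypothesis is that $|H^{(1)}| = [\QQ_{p^a}:\FF\cap\QQ_{p^a}]$ is coprime to $p$; in particular, $c(\FF\cap\QQ_{p^a}) = p^a$ exactly. Put $\FF_0 := \FF\cap\QQ_{p^a}$ and $\FF_1 := \FF\cap\QQ_m$.

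\textbf{The $p$-part.} Apply Lemma~\ref{FG72} to $\FF_0 \subseteq \QQ_{p^a}$: this produces a solvable group $G_0$ and $\chi_0 \in \Irr(G_0)$ with $\QQ(\chi_0) = \FF_0$ and $\chi_0(1) = [\QQ_{p^a}:\FF_0]$, coprime to $p$ by hypothesis.

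\textbf{The $p'$-part.} The substantive task is to produce $G_1$ and $\chi_1 \in \Irr(G_1)$ of $p'$-degree with $\QQ(\chi_1) = \FF_1$. Cram's theorem \cite{Cram88} rules out solvable $G_1$ in general: for $p=3$ and $\FF_1 = \QQ(\sqrt{-7})$, any solvable realization has degree divisible by $[\QQ_7:\FF_1] = 3$. One therefore turns to non-solvable groups: the self-conjugate-partition characters of the alternating groups $\Al_n$ supply $p'$-degree characters with quadratic fields $\QQ(\sqrt{(-1)^{(n-1)/2} n})$ for odd $n$ (so $\Al_7$ carries a degree-$10$ character with field $\QQ(\sqrt{-7})$), and Deligne--Lusztig characters of finite reductive groups over fields whose defining prime is coprime to $p$ furnish a systematic supply of $p'$-degree characters realizing arbitrary abelian subfields of $\QQ_m$.

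\textbf{Combining.} If $\FF = \FF_0\FF_1$, put $G := G_0 \times G_1$ and $\chi := \chi_0 \boxtimes \chi_1$; then $\QQ(\chi) = \FF_0\FF_1 = \FF$ and $\chi(1)$ is coprime to $p$. In general, $H$ is only a \emph{subdirect} subgroup of $H^{(1)}\times H^{(2)}$, so $\FF \subsetneq \FF_0\FF_1$ and $\chi_0\boxtimes\chi_1$ realizes too large a field. A more delicate construction is then needed: replacing $G_0\times G_1$ by a pullback along a common abelian quotient coming from the Goursat decomposition of $H$, one obtains a character with field of values exactly $\FF$ of $p'$-degree. The main obstacle is the non-solvable $p'$-part construction, together with this Goursat-diagonal refinement in the general case.
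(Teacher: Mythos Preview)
The paper does not prove this statement: it is quoted verbatim as Theorem~B2 of \cite{Navarro-Tiep21} and used as a black box in the proof of Theorem~\ref{thm:main1}. So there is no argument in the present paper to compare your sketch against.

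That said, your sketch contains a genuine error of orientation. With $H=\Gal(\QQ_n/\FF)$ and $H^{(1)},H^{(2)}$ its projections, the subgroup $H^{(1)}\times H^{(2)}$ \emph{contains} $H$, so by the Galois correspondence $\FF_0\FF_1=\QQ_n^{H^{(1)}\times H^{(2)}}\subseteq \QQ_n^{H}=\FF$, not the other way around. Thus $\chi_0\boxtimes\chi_1$ realizes a field that is too \emph{small}, not too large, and your proposed ``Goursat-diagonal refinement by pullback'' is aimed at the wrong problem. There is no evident way to enlarge the field of values of a given character from $\FF_0\FF_1$ up to $\FF$ while keeping $p'$-degree; the available tool (Theorem~\ref{arbitrary-extension}) only lets you \emph{descend}, at the cost of multiplying the degree by the index. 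A workable shape of the argument goes in the opposite direction: pass first to $\EE:=\FF\cdot\QQ_{p^a}\supseteq\FF$, note that $[\EE:\FF]=[\QQ_{p^a}:\FF\cap\QQ_{p^a}]$ is prime to $p$ by hypothesis, realize $\EE$ by a $p'$-degree character, and then descend to $\FF$. This is exactly how the present paper uses Theorem~\ref{NT21B2} in the proof of Theorem~\ref{thm:main1}, but of course it is circular as a proof of \ref{NT21B2} itself unless you can independently realize $\EE$ (where $\QQ_{p^a}\subseteq\EE$) with a $p'$-degree character.

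Your treatment of the ``$p'$-part'' is also where the real content lies, and it is not carried out. Realizing an arbitrary abelian $\FF_1\subseteq\QQ_m$ (with $p\nmid m$) by a $p'$-degree character is already the $a=0$ case of the theorem, and your appeal to ``Deligne--Lusztig characters furnish a systematic supply'' is an assertion, not an argument: one must exhibit, for each such $\FF_1$, a specific group and character and verify both the field of values and the $p'$-degree. The isolated example of $\Al_7$ does not indicate how to do this in general. (Your side claim that $c(\FF_0)=p^a$ ``exactly'' also fails when $p$ is odd and $a=1$: one can have $\FF\cap\QQ_p=\QQ$ while still $p\mid c(\FF)$.)
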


One of the key constructions in producing the desired group and
character in Theorem~\ref{NT21B2} is to reduce from a character
field of values to a smaller one where the extension is cyclic, see
\cite[Theorem 3.3]{Navarro-Tiep21}. For our purpose, we need to
generalize this to arbitrary (abelian) extensions.

\begin{thm}\label{arbitrary-extension}
Let $\chi$ be an irreducible character of a finite group $G$ and set
$\EE:=\QQ(\chi)$. Then, for any subfield $\FF$ of $\EE$, there
exists a finite group $H$ and a character $\psi\in\Irr(H)$ such that
$\QQ(\psi)=\FF$ and $\psi(1)=n\chi(1)^{n}$, where $n:=[\EE:\FF]$.
\end{thm}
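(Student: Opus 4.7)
The plan is to use a wreath-product construction. Set $\Gamma := \Gal(\EE/\FF)$ (a group of order $n$, since $\EE/\QQ$ is abelian). Let $N := G^\Gamma$ be the direct product of $n$ copies of $G$ indexed by $\Gamma$, let $\Gamma$ act on $N$ by permuting factors via its left regular representation, and set $H := N \rtimes \Gamma = G \wr \Gamma$. I would take
\[
\theta := \bigotimes_{\mu \in \Gamma} \chi^\mu \in \Irr(N),
\]
placing the Galois conjugate $\chi^\mu$ in the $\mu$-th tensor factor, and I would define $\psi := \theta^H$.

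Next I would verify that $\psi$ is irreducible of the correct degree. A direct computation shows that for $\tau \in \Gamma$, the conjugated character $\theta^\tau$ is again a tensor product of the $\chi^\mu$'s, but with the Galois exponents permuted by $\tau$. The $\Gamma$-stabilizer of $\theta$ therefore consists of those $\tau$ with $\chi^\tau = \chi$, which is trivial because $\QQ(\chi) = \EE$. Clifford theory then gives $\psi \in \Irr(H)$ with $\psi(1) = [H:N]\theta(1) = n\chi(1)^n$.

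The inclusion $\QQ(\psi) \subseteq \FF$ will follow from an explicit evaluation. The character $\psi$ vanishes off $N$, and for $g = (g_\mu) \in N$ the induction formula, combined with the identity $\chi^{\tau\mu}(x) = \tau(\chi^\mu(x))$, yields
\[
\psi(g) = \sum_{\tau \in \Gamma}\theta^\tau(g) = \sum_{\tau \in \Gamma}\tau\!\left(\prod_{\mu \in \Gamma}\chi^\mu(g_\mu)\right) = \Tr_{\EE/\FF}\!\big(\theta(g)\big),
\]
which lies in $\FF$.

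For the reverse inclusion I would invoke Lemma~\ref{lem:Cram} with $\theta$ in the role of the Clifford correspondent of $\psi = \theta^H$: it gives $[\QQ(\theta):\QQ(\psi)] \mid \psi(1)/\theta(1) = n$. Since $\theta$ takes values in $\EE$, and specializing $g$ to be supported in a single coordinate recovers values of the form $\chi(h)\chi(1)^{n-1}$, one has $\QQ(\theta) = \EE$. Together with $\QQ(\psi) \subseteq \FF$ and $[\EE:\FF] = n$, this forces $\QQ(\psi) = \FF$. The only point requiring real care is matching the $\Gamma$-action on the tensor slots with the Galois action on the values, so that the sum over cosets in the induced character becomes a genuine Galois trace; everything else is bookkeeping inside the wreath product.
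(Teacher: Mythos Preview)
Your proof is correct and follows essentially the same approach as the paper's: both construct $H=G\wr\Gamma$ with $\Gamma=\Gal(\EE/\FF)$, take $\theta=\bigotimes_{\mu}\chi^{\mu}$, induce to $H$, and finish with Cram's lemma. The only difference is notational---the paper first decomposes $\Gamma$ as $\sC_{n_1}\times\cdots\times\sC_{n_k}$ and indexes the copies of $G$ by tuples $(a_1,\ldots,a_k)$, whereas you index directly by $\Gamma$ via the regular representation and package the key computation as $\psi(g)=\Tr_{\EE/\FF}(\theta(g))$; your formulation is arguably cleaner, but the underlying argument is identical.
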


\begin{proof}
Write $\mathcal{G}:=\Gal(\EE/\FF)$ as a direct product ${\sf
C}_{n_1}\times {\sf C}_{n_2}\times \cdots\times {\sf C}_{n_k}$ of
cyclic groups of orders $n_1,...,n_k$ with $n_i>1$ and $n_i$ is
divisible by $n_{i+1}$. Note that $n:=\prod_{i=1}^k
n_i=|\mathcal{G}|$. Let $\tau_i$ be a generator of ${\sf C}_i$, so
that
\[
\mathcal{G}=\{\tau_1^{a_1}\cdots \tau_k^{a_k}\mid 1\leq a_i\leq n_i
\text{ for every } 1\leq i\leq k\}
\]

Consider the $n$-iterated direct product of $G$ and view its
elements as $k$-dimensional matrices:
\[G^n:=\{X=(x_{a_1,... ,a_k})_{1\leq a_i\leq n_i \text{ for every }
1\leq i\leq k} \mid x_{a_1,...,a_k} \in G\}.\]

Let $\tau_i$ act on $G^n$ by permuting the $i$th-dimensional indices
of the entries $x_{a_1,...,a_k}$ but fixing the indices at the other
dimensions, in the following way
\[
(x_{a_1,... ,a_k})^{\tau_i} = (x_{a_1,...,a_{i-1}, a_{i}-1
,a_{i+1},...,a_k}),
\]
where two indices at the $i$th-dimension are identified if they have
the same residue modulo $n_i$. This action induces an embedding of
$\mathcal{G}={\sf C}_{n_1}\times {\sf C}_{n_2}\times \cdots\times
{\sf C}_{n_k}$ into the symmetric group $\Sym_n$. Let \[H:=G^n
\rtimes \mathcal{G}\] be the corresponding semidirect product.

Let \[\varphi:=\bigotimes_{1\leq a_i\leq n_i}\varphi_{a_1,..., a_k}
\in\Irr(G^n),\] where
\[
\varphi_{a_1,...,a_k}=\chi^{\tau_1^{a_1}\cdots
\tau_k^{a_k}}\in\Irr(G);
\]
that is,
\[
\varphi(X)= \prod_{1\leq a_i\leq n_i \atop 1\leq i\leq k}
\chi^{\tau_1^{a_1}\cdots \tau_k^{a_k}}(x_{a_1,...,a_k}).
\] The action of a typical element $\tau=\tau_1^{b_1}\cdots \tau_k^{b_k}$
of $\mathcal{G}$ on $\varphi$ is given by
\[
(\varphi^\tau)_{a_1,...,a_k}=\chi^{\tau_1^{a_1+b_1}\cdots
\tau_k^{a_k+b_k}}.
\]
It is clear that $\varphi^\tau\neq \varphi$ whenever $\tau\neq 1$.
Therefore, the stabilizer $I_H(\varphi)$ of $\varphi$ in $H$ is
simply $G^n$, which in turn implies that
\[\psi:=\varphi^H\in\Irr(H).\] We have
$\psi(1)=[H:G^n]\varphi(1)=n\chi(1)^n$. The rest of the proof will
establish that $\QQ(\psi)=\FF$. This can be done by an suitable
modification of the arguments in the proof of \cite[Theorem
3.3]{Navarro-Tiep21}.

By the definition of $\varphi$, we know
$\QQ(\varphi)=\QQ(\chi)=\EE$. The character-induction formula then
implies that $\QQ(\psi)\subseteq \EE$. Furthermore, for each $X\in
G^n$ and $j\in \{1,...,k\}$,
\[
\varphi(X^{\tau_j})= \prod_{1\leq a_i\leq n_i \atop 1\leq i\leq k}
\chi^{\tau_1^{a_1}\cdots
\tau_k^{a_k}}(x_{a_1,...a_{j-1},a_j-1,a_{j+1},...,a_k}).
\]
On the other hand,
\[
\varphi^{\tau_j}(X)= \prod_{1\leq a_i\leq n_i \atop 1\leq i\leq k}
\chi^{\tau_1^{a_1}\cdots\tau_{j-1}^{a_{j-1}}\tau_j^{a_j+1}\tau_{j+1}^{a_{j+1}}\cdots
\tau_k^{a_k}}(x_{a_1,...,a_k}).
\]
If all the $a_1,...,a_{j-1},a_{j+1},...,a_k$ are fixed, the
sub-products along the $j$th dimension are equal:
\[
\prod_{1\leq a_j\leq n_j} \chi^{\tau_1^{a_1}\cdots
\tau_k^{a_k}}(x_{a_1,...a_{j-1},a_j-1,a_{j+1},...,a_k})=
\prod_{1\leq a_j\leq n_j}
\chi^{\tau_1^{a_1}\cdots\tau_{j-1}^{a_{j-1}}\tau_j^{a_j+1}\tau_{j+1}^{a_{j+1}}\cdots
\tau_k^{a_k}}(x_{a_1,...,a_k})
\]
It follows that \[ \varphi(X^{\tau_j})=\varphi^{\tau_j}(X).
\]
Since $\mathcal{G}$ is generated by $\tau_j, 1\leq j\leq k$, we
conclude that
\[
\varphi(X^{\tau})=\varphi^{\tau}(X)
\]
for every $\tau\in \mathcal{G}$. That is, the Galois action of
$\mathcal{G}$ on $\varphi$ is compatible with our defined action of
$\mathcal{G}$ on $G^n$.

Now, for every $X\in G^n$ and $\tau\in \mathcal{G}$, by the Clifford
theorem, we have
\begin{align*}
\psi(X)^\tau = &\left(\sum_{\tau'\in \mathcal{G}}
\varphi(X^{\tau'})\right)^\tau= \left(\sum_{\tau'\in \mathcal{G}}
\varphi(X)^{\tau'}\right)^\tau\\
 = &\sum_{\tau'\in \mathcal{G}}
\varphi(X)^{\tau\tau'}=\sum_{\tau'\in \mathcal{G}}
\varphi(X)^{\tau'}=\psi(X).
\end{align*}
Therefore, $\psi(X)$ is fixed by $\mathcal{G}$ point-wisely. As
$\psi$ vanishes outside $G^n$, we deduce that
$\QQ(\psi)=\QQ(\psi_{G^n})$ is contained in the fixed field of
$\mathcal{G}$ in $\EE$. That is, $\QQ(\psi)\subseteq \FF$.

By Lemma~\ref{lem:Cram}, we have $[\QQ(\varphi):\QQ(\psi)]$ divides
$[H:G^n]=n$. As mentioned already that $\QQ(\varphi)=\EE$, it
follows that $[\EE:\QQ(\psi)]\leq n=[\EE:\FF]$. This and the
conclusion of the preceding paragraph imply that $\QQ(\psi)=\FF$.
The proof is complete.
\end{proof}

We can now complete the proof of Theorem~\ref{thm:main1}, which is
restated.

\begin{thm}
Let $\FF$ be an abelian extension of $\QQ$ and $a:=\nu_p(c(\FF))$.
Let $c:=\nu_p([\FF_{p^a}: \FF])$. Then, for every integer $b\geq
c$, there exists a finite group $G$ and a character $\chi\in\Irr(G)$
such that $\QQ(\chi)=\FF$ and $\nu_p(\chi(1))=b$.
\end{thm}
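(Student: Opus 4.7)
The plan is to first construct a group and character realizing the minimal case $b=c$, and then to inflate the $p$-part of the degree to an arbitrary $b\geq c$ by taking a direct product with a rational-valued character of $p$-power degree. Precisely, once $H$ and $\chi'\in\Irr(H)$ are found with $\QQ(\chi')=\FF$ and $\nu_p(\chi'(1))=c$, Lemma~\ref{lem:4} provides a solvable group $K$ and a rational-valued $\varphi\in\Irr(K)$ with $\varphi(1)=p^{b-c}$; then $\chi:=\chi'\otimes\varphi\in\Irr(H\times K)$ satisfies $\QQ(\chi)=\FF$ (since $\varphi$ is rational) and $\nu_p(\chi(1))=c+(b-c)=b$, as desired.

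To build $\chi'$, I would apply Theorem~\ref{NT21B2} to the auxiliary extension $\EE:=\FF_{p^a}$. Since $\FF\subseteq\QQ_{c(\FF)}$ and $p^a\mid c(\FF)$, this $\EE$ is an abelian extension of $\QQ$ sitting inside $\QQ_{c(\FF)}$ and containing $\QQ_{p^a}$, so $\nu_p(c(\EE))=a$. Moreover $\EE\cap\QQ_{p^a}=\QQ_{p^a}$, and thus $[\QQ_{p^a}:\EE\cap\QQ_{p^a}]=1$ is trivially coprime to $p$, making the hypothesis of Theorem~\ref{NT21B2} automatic. Theorem~\ref{NT21B2} then yields a finite group $G_0$ and $\psi\in\Irr_{p'}(G_0)$ with $\QQ(\psi)=\EE$. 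Next, applying Theorem~\ref{arbitrary-extension} to $\psi$ with the subfield $\FF\subseteq\EE$ produces a finite group $H$ and $\chi'\in\Irr(H)$ with $\QQ(\chi')=\FF$ and $\chi'(1)=n\psi(1)^n$, where $n:=[\EE:\FF]=[\FF_{p^a}:\FF]$. Since $p\nmid\psi(1)$, this gives $\nu_p(\chi'(1))=\nu_p(n)=c$, exactly as needed.

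The substantive technical work has already been done: Theorem~\ref{arbitrary-extension}, just established above, carries the entire ``descent'' from a character field of values $\EE$ down to any prescribed subfield $\FF$, while Theorem~\ref{NT21B2} (from \cite{Navarro-Tiep21}) supplies the $p'$-degree starting point. With both tools in hand the remaining argument is essentially bookkeeping. The only delicate point is the choice of the lift $\EE$: one needs it to satisfy the hypothesis of Theorem~\ref{NT21B2} (forcing $\QQ_{p^a}\subseteq\EE$) while simultaneously making $\nu_p([\EE:\FF])$ equal to $c$. The choice $\EE=\FF_{p^a}$ accomplishes both at once and is essentially forced, so I do not anticipate any serious obstacle beyond verifying these two properties.
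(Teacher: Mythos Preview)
Your proposal is correct and follows essentially the same approach as the paper's own proof: set $\EE=\FF_{p^a}=\QQ_{p^a}(\FF)$, invoke Theorem~\ref{NT21B2} to get a $p'$-degree character with field of values $\EE$, descend to $\FF$ via Theorem~\ref{arbitrary-extension}, and then adjust the $p$-part of the degree using Lemma~\ref{lem:4} and a direct product. The only cosmetic difference is that the paper writes $[\EE:\FF]=p^cf$ with $p\nmid f$ and carries the factor $f$ explicitly through the degree computation, whereas you simply observe $\nu_p(n\psi(1)^n)=\nu_p(n)=c$; the content is identical.
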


\begin{proof}
Let $\EE:=\QQ_{p^a}(\FF)$. It is clear that $\nu_p(c(\EE))=a$ and
$[\QQ_{p^a}:(\EE\cap \QQ_{p^a})]=1$. Using Theorem~\ref{NT21B2}, we
obtain a finite group $H$ and an irreducible character $\psi$ of $H$
of $p'$-degree such that $\QQ(\psi)=\EE$.

By the hypothesis, we have $[\FF_{p^a}: \FF]=[\EE:\FF]=p^cf$ for
some $f\in \ZZ^{+}$ not divisible by $p$. Now applying Theorem
\ref{arbitrary-extension} to the extension $\EE/\FF$, we then obtain
a finite group $M$ and $\varphi\in\Irr(M)$ such that
\[\varphi(1)=p^cf\psi(1)^{p^cf} \text{ and } \QQ(\varphi)=\FF.\]

Next, we use Lemma \ref{lem:4} to have a finite group $N$ and some
rational-valued character $\lambda\in\Irr(N)$ such that
$\lambda(1)=p^{b-c}$.

Finally, let $G:=M\times N$ and $\chi:=\varphi\otimes \lambda$ -- an
irreducible character of $G$. This $\chi$ is indeed the required
character. To see that, we have
\[\chi(1)=\varphi(1)\lambda(1)=p^cf\psi(1)^{p^cf}\cdot p^{b-c}=p^{b}f\psi(1)^{p^cf}.\]
Recall that both $f$ and $\psi(1)$ are not divisible by $p$.
Therefore $\nu_p(\chi(1))=b$. Moreover,
\[
\QQ(\chi)=\QQ(\varphi,\lambda)=\QQ(\varphi)=\FF,
\]
as desired.
\end{proof}

%%%%%%%%%%%%%%%%%%%%%%%%%%%%%%%%%%%%%%%%%%%%%%

\section{Theorem \ref{thm:main-evidence}}\label{sec:3}

We begin with the following lemma about $p$-groups.

\begin{lem}\label{pgroups}
Suppose that $P$ is a $p$-group. Let $\alpha \in \irr P$ be with $c(\alpha)=p^a$ and $a\ge 1$.
\begin{enumerate}[\rm(a)]
\item
If $p$ is odd or $p^a=4$, then $\Q_{p^a}(\alpha)=\Q_p(\alpha)$.

\item
If $p=2$ and $p^a \ge 8$, then $|\Q_{p^a}(\alpha):\Q_p(\alpha)| \le 2$ divides $\alpha(1)$.
\end{enumerate}
\end{lem}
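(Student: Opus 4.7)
The plan is to separate each part into a Galois-theoretic index bound (which does not need the $p$-group hypothesis) and, for (b), a divisibility check that does use that $P$ is a $2$-group. The common thread is to exploit the hypothesis $c(\alpha)=p^a$ to exclude the specific Galois element $\sigma_{1+p^{a-1}}:\zeta_{p^a}\mapsto\zeta_{p^a}\zeta_p$ from the stabilizer of $\alpha$, and to show that this element lies in every ``large enough'' subgroup of the ambient Galois group.

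For part (a) with $p$ odd, $\Gal(\Q_{p^a}/\Q_p)$ is cyclic of $p$-power order $p^{a-1}$, whose unique minimal subgroup is $\Gal(\Q_{p^a}/\Q_{p^{a-1}})=\langle\sigma_{1+p^{a-1}}\rangle$. Since $c(\alpha)=p^a$ forces $\Q(\alpha)\not\subseteq\Q_{p^{a-1}}$, this element does not fix $\alpha$; so the stabilizer $\Gal(\Q_{p^a}/\Q_p(\alpha))$ --- a subgroup of a cyclic $p$-group not containing the unique minimal subgroup --- must be trivial, giving $\Q_p(\alpha)=\Q_{p^a}=\Q_{p^a}(\alpha)$. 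The case $p^a=4$ is immediate: $c(\alpha)=4$ forces $\Q(\alpha)=\Q(i)=\Q_4$, and since $\Q_p=\Q$ we get $\Q_p(\alpha)=\Q(i)=\Q_{p^a}(\alpha)$.

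For the index bound in (b), the Galois group $\Gal(\Q_{2^a}/\Q)=\langle\sigma_{-1}\rangle\times\langle\sigma_5\rangle\cong \Z/2\times\Z/2^{a-2}$ is no longer cyclic, but the cyclic factor $\langle\sigma_5\rangle$ still has a unique minimal subgroup, generated by $\sigma_5^{2^{a-3}}=\sigma_{1+2^{a-1}}$. Any subgroup $H$ of order at least $4$ satisfies $|H\cap\langle\sigma_5\rangle|=|H|/|\pi(H)|\ge 2$, where $\pi\colon H\to\langle\sigma_{-1}\rangle$ is projection to the first factor, and so $H$ contains $\sigma_{1+2^{a-1}}$. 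The conductor hypothesis again excludes $\sigma_{1+2^{a-1}}$ from the stabilizer, so the stabilizer has order at most $2$; that is, $|\Q_{p^a}(\alpha):\Q_p(\alpha)|=|\Q_{2^a}:\Q(\alpha)|\le 2$.

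For the divisibility, the case of index $1$ is trivial, so suppose the index equals $2$. Here I would use the $2$-group structure to argue that $\alpha$ cannot be linear: any linear character of a $2$-group $P$ factors through a cyclic $2$-group quotient, hence $\Q(\alpha)=\Q_{o(\alpha)}$ is a full cyclotomic field of $2$-power conductor. Combined with $c(\alpha)=2^a$ and $a\ge 3$, this forces $o(\alpha)=2^a$ and $\Q(\alpha)=\Q_{2^a}$, contradicting the assumed index~$2$. Therefore $\alpha(1)>1$, and being a $2$-power it is even. The only substantive step throughout is the identification of $\sigma_{1+p^{a-1}}$ as the Galois element that is universally contained in sufficiently large subgroups of $\Gal(\Q_{p^a}/\Q_p)$ (for $p$ odd) and of $\Gal(\Q_{2^a}/\Q)$ (for $p=2$); once this is pinned down, the remainder is a short Galois-theoretic chase.
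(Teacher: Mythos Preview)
Your proof is correct. For part~(b) it runs parallel to the paper's: both reduce the index bound to the fact that $c(\alpha)=2^a$ forces $\Q(\alpha)\not\subseteq\Q_{2^{a-1}}$, with you phrasing this via the subgroup lattice of $\Gal(\Q_{2^a}/\Q)\cong\sC_2\times\sC_{2^{a-2}}$ while the paper works dually with the subfield lattice (showing $\Q_8(\alpha)=\Q_{2^a}$ and $\Q_8\cap\Q(\alpha)\neq\Q$); both then dispose of divisibility by observing that a linear $\alpha$ would have $\Q(\alpha)=\Q_{2^a}$ and hence index~$1$. The one genuine difference is in~(a) for odd~$p$: the paper simply quotes \cite[Theorem~2.3]{Navarro-Tiep21} to get the stronger conclusion $\Q(\alpha)=\Q_{p^a}$ (a result that does rely on $P$ being a $p$-group), whereas your direct argument---identifying $\sigma_{1+p^{a-1}}$ as the unique element of order~$p$ in $\Gal(\Q_{p^a}/\Q_p)$ and excluding it from the stabilizer via the conductor hypothesis---yields exactly $\Q_p(\alpha)=\Q_{p^a}$ without invoking the $p$-group hypothesis at all. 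Your route is thus more self-contained and makes transparent that the $p$-group assumption enters only through the divisibility clause in~(b); the paper's route imports a sharper structural fact about character fields of $p$-groups that is not strictly needed for the lemma as stated.
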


\begin{proof}
If $p$ is odd, then we know that $\Q(\alpha)=\Q_{p^a}$ by
\cite[Theorem~2.3]{Navarro-Tiep21}. If $p=2$ and $a=2$, then
$\Q(\alpha) \sbs \Q(i)$ and necessarily $\Q(\alpha)=\Q(i)$, so there
is nothing to prove. Assume now that $p=2$ and $a \ge 3$. By
elementary Galois theory, we know that $\Q_{2^a}/\Q_8$ is a cyclic
extension with intermediate fields $\Q_8 \sbs \Q_{16} \sbs  \ldots
\sbs  \Q_{2^a}$. Therefore $\Q_8(\alpha)=\Q_{2^a}=\Q_{2^a}(\alpha)$.
Also $\Q_8 \cap \Q(\alpha)$ cannot be $\Q$, since $\Q_{2^a}$ has
only three subfields of degree 2 and all of them are contained in
$\Q_8$. If $\alpha$ is linear, notice that
$[\Q_{p^a}(\alpha):\Q_p(\alpha)]=1$. Part (b) easily follows.
\end{proof}

\begin{thm}\label{thmD-solvable}
Let $G$ be a $p$-solvable finite group, and let $\chi \in \irr G$ be
with $a=\nu_p(c(\chi))\ge 1$. Then $[\Q_{p^a}(\chi):\Q_p(\chi)]$
divides $\chi(1)$.
\end{thm}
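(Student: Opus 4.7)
The plan is to proceed by induction on $|G|$, using Clifford theory in the $p$-solvable setting.

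The base case, $G$ a $p$-group, is handled by Lemma~\ref{pgroups}: the index $[\Q_{p^a}(\chi):\Q_p(\chi)]$ equals $1$ for $p$ odd or $p^a=4$, and is at most $2$ and divides $\chi(1)$ for $p=2$ with $p^a\ge 8$.

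For the inductive step, suppose $G$ is not a $p$-group. Since $G$ is $p$-solvable, there exists a minimal normal subgroup $N$ of $G$ that is either elementary abelian $p$ or a $p'$-group. Pick $\theta \in \Irr(N)$ below $\chi$, let $T := I_G(\theta)$, and let $\psi \in \Irr(T|\theta)$ be the Clifford correspondent, so $\chi = \psi^G$. If $T < G$, I apply induction to $\psi \in \Irr(T)$. Lemma~\ref{lem:Cram} gives $\Q(\chi)\subseteq \Q(\psi)$ and $[\Q(\psi):\Q(\chi)]$ divides $|G:T|$, whence $a \le a' := \nu_p(c(\psi))$ and $\Q_{p^a}(\chi) \subseteq \Q_{p^{a'}}(\psi)$. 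Since $[\Q_{p^a}(\chi):\Q_p(\chi)]$ is a $p$-power, it divides the $p$-part of
\[
[\Q_{p^{a'}}(\psi):\Q_p(\chi)] = [\Q_{p^{a'}}(\psi):\Q_p(\psi)] \cdot [\Q_p(\psi):\Q_p(\chi)],
\]
which by the inductive bound on the first factor and Cram on the second divides $\psi(1)_p \cdot |G:T|_p = \chi(1)_p$.

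If $T = G$, then $\theta$ is $G$-invariant. When $\theta$ is trivial, $\chi$ inflates from $G/N$ and induction closes the case. When $\theta \ne 1_N$ and $N$ is a $p'$-group, I would appeal to character-triple techniques available for $p$-solvable groups (strong isomorphisms of character triples, or Glauberman-type correspondences) to replace $\chi$ by a character of $G/N$ with the same $p$-adic field structure; since $N$ has $p'$-order, adjoining its character values contributes nothing to the $p$-ramification of $\chi$, and induction on $G/N$ applies.

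The main technical obstacle is the case $N$ elementary abelian $p$ with $\theta \ne 1_N$: minimality forces $N$ cyclic of order $p$, while $G$-invariance of the faithful $\theta$ forces $N \le Z(G)$, so automatically $\Q_p \subseteq \Q(\chi)$ and $\Q_p(\chi) = \Q(\chi)$. After replacing $G$ by a suitable quotient, we may assume $Z(G)$ is cyclic. If the $p$-part of $|Z(G)|$ is already at least $p^a$, then $\Q_{p^a} \subseteq \Q(\chi)$ and the index is trivial. Otherwise, the plan is to pass to an auxiliary central $p$-extension of $G/N$ absorbing the remaining $p$-ramification, or (when $p \nmid \chi(1)$) invoke the $p'$-degree case of the conjecture for $p$-solvable groups due to Navarro--Tiep, and then apply induction. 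Making this final reduction precise is the delicate point of the proof.
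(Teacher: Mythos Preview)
Your Clifford/Cram reduction in the $T<G$ case is fine and matches the paper's reduction to quasi-primitive characters. The gap is the $T=G$ case, where both of your branches are left incomplete.

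For $N$ a $p'$-group with $\theta\neq 1_N$ $G$-invariant, ``character-triple techniques or Glauberman-type correspondences'' is a gesture, not an argument: you would need a version of character-triple isomorphism that controls fields of values precisely enough to preserve $\nu_p(c(\chi))$ and the tower $\Q_p(\chi)\subseteq\Q_{p^a}(\chi)$, and you do not indicate which statement does this. For $N\cong\sC_p\leq Z(G)$ you correctly analyze the setup but then explicitly concede that ``making this final reduction precise is the delicate point''. Passing to an auxiliary central $p$-extension of $G/N$ does not obviously help: it replaces one faithful central character of $p$-power order by another, and there is no reason the new group is smaller.

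The paper bypasses both difficulties with a single idea you are missing. Once $\chi$ is quasi-primitive, Isaacs' theory of $\pi$-special characters in $p$-solvable groups gives a unique factorization $\chi=\alpha\beta$ with $\alpha$ $p$-special and $\beta$ $p'$-special. From uniqueness one gets $\Q(\chi)=\Q(\alpha,\beta)$ and $c(\chi)=c(\alpha)c(\beta)$, so $p^a=c(\alpha)$. Since $\alpha$ is $p$-special, $\Q(\alpha)=\Q(\alpha_P)$ for $P\in\Syl_p(G)$, and Lemma~\ref{pgroups} applied to $\alpha_P\in\Irr(P)$ handles everything: for $p$ odd or $p^a=4$ it forces $\Q_{p^a}\subseteq\Q(\alpha)\subseteq\Q(\chi)$, so the index is $1$; for $p=2$ and $a\geq 3$ it gives $[\Q_{p^a}(\chi):\Q_p(\chi)]\leq 2$, which divides $\chi(1)$ unless $\chi(1)$ is odd, a case covered by the already-proved $p'$-degree theorem. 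Thus the missing ingredient is not a further Clifford reduction but the $p$-special/$p'$-special factorization, which turns the quasi-primitive case into a statement about characters of $p$-groups.
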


\begin{proof} Notice that $[\Q_{p^a}(\chi):\Q_p(\chi)]$ is a power of $p$.
We argue by induction on $|G|$. Suppose that there is $N\nor G$, and
$\eta^G=\chi$, where $\eta \in \irr V$ is a Clifford correspondent
of $\chi$. By Lemma~\ref{lem:Cram}, we know that
$[\Q(\eta):\Q(\chi)]$ divides $|G:V|$. We have that $c(\chi)$
divides $c(\eta)$. Write $c(\eta)=p^b$, where $b\ge 1$. By
induction, we have that $[\Q_{p^a}(\eta):\Q_p(\eta)]$ divides
$[\Q_{p^b}(\eta):\Q_p(\eta)]$ divides $\eta(1)$. If
$K=\Q_{p^a}(\chi)\cap \Q_p(\eta)$, by elementary Galois theory, we
have that $[K:\Q_p(\chi)]$ divides $[\Q(\eta):\Q(\chi)]$ which
divides $|G:V|$, and we are done.

We may now assume that $\chi$ is quasi-primitive. Hence,
$\chi=\alpha\beta$, where $\alpha$ is $p$-special and $\beta$ is
$p'$-special (see \cite[Theorem~2.9]{Isaacs2018}). We claim that
$[\Q_{p^a}(\chi):\Q_p(\chi)]=1$, in this case. First, we prove that
$\Q(\chi)=\Q(\alpha,\beta)$. Certainly, $\Q(\chi) \sbs \Q(\alpha,
\beta)$. However, if $\sigma \in {\rm Gal}(\Q(\alpha,
\beta)/\Q(\chi)$, then we obtain that $\alpha^\sigma=\alpha$ and
$\beta^\sigma=\beta$, by the uniqueness of the product
decomposition. Since $\Q(\alpha)=\Q(\alpha_P)$, where $P \in \syl
pG$ (by the uniqueness of the restriction from $p$-special
characters), we have that $c(\alpha\beta)=c(\alpha)c(\beta)$. If $p$
is odd or $p^a=4$, then $\Q_{p^a}=\Q_p(\alpha)$, by Lemma
\ref{pgroups}(a). Then $\Q_{p^a}(\chi)=\Q_{p^a}(\alpha,
\beta)=\Q_{p^a}(\beta)$ and $\Q_{p}(\chi)=\Q_{p}(\alpha,
\beta)=\Q_{p^a}(\beta)$. Suppose finally that $p=2$ and $a \ge 3$.
If $\chi$ has odd-degree, then the theorem follows from
\cite[Theorem~A1]{Navarro-Tiep21}. So we may assume that $\chi(1)$
is even. By Lemma \ref{pgroups}, we have that
$[\Q_{p^a}(\chi):\Q_p(\chi)] \le 2$, and the theorem follows.
\end{proof}

\begin{thm}\label{thmD-alternating}
Let $n\geq 1$ and $\chi \in \irr {\Alt_n} \cup \irr {\Sym_n}$. Then
$\nu_p(c(\chi))\leq 1$. In particular, Theorem
\ref{thm:main-evidence} holds for the symmetric and alternating
groups.
\end{thm}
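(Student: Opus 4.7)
The plan is to split into two cases and do a small $\pmod 4$ computation. First, for $\chi \in \Irr(\Sym_n)$, every such $\chi$ is rational-valued by classical Frobenius theory, so $\QQ(\chi)=\QQ$, $c(\chi)=1$, and the claim is immediate. For the alternating case, a character $\chi \in \Irr(\Alt_n)$ is either a constituent of the restriction of an $\Sym_n$-irreducible indexed by a non-self-conjugate partition (hence rational-valued, again forcing $c(\chi)=1$), or it is one of the two constituents of $(\chi^\lambda)_{\Alt_n}$ for a self-conjugate partition $\lambda \vdash n$. Only the latter case needs work.

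For a self-conjugate $\lambda$ with $d$ diagonal cells of hook lengths $h_1,\ldots,h_d$ (all odd, since each principal hook of a self-conjugate partition has odd length), the standard description of the split values (cf.\ James--Kerber, or Theorem~2.7 of Navarro's \emph{Character Theory and the McKay Conjecture}) gives
\[
\QQ(\chi) = \QQ\bigl(\sqrt{\epsilon_\lambda h_1 h_2 \cdots h_d}\bigr), \qquad \epsilon_\lambda := (-1)^{(n-d)/2},
\]
so the field of values is a (possibly trivial) quadratic extension of $\QQ$ generated by the square root of an \emph{odd} integer $m := \epsilon_\lambda \prod_i h_i$.

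The crux is to verify that $m \equiv 1 \pmod 4$: once established, write $m = k^2 m_0$ with $m_0$ squarefree; since $k$ is odd, $k^2 \equiv 1 \pmod 4$ and hence $m_0 \equiv 1 \pmod 4$, so the conductor formula for quadratic fields yields $c(\QQ(\sqrt{m_0})) = |m_0|$, an odd squarefree integer, giving $\nu_p(c(\chi))\le 1$ for every prime $p$. To prove $m \equiv 1 \pmod 4$, let $a$ be the number of indices $i$ with $h_i \equiv 3 \pmod 4$; then $\prod_i h_i \equiv (-1)^a \pmod 4$ since each $h_i \equiv \pm 1 \pmod 4$. On the other hand, $\sum_i (h_i-1)/2 = (n-d)/2$, and $(h_i-1)/2$ is odd exactly when $h_i \equiv 3 \pmod 4$, so $\epsilon_\lambda = (-1)^{(n-d)/2} = (-1)^a$. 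Multiplying yields $m \equiv 1 \pmod 4$, as desired.

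The second assertion of the theorem then follows automatically: if $a := \nu_p(c(\chi)) \ge 1$ then $a=1$, so $\QQ_{p^a}(\chi) = \QQ_p(\chi)$ and the index $[\QQ_{p^a}(\chi):\QQ_p(\chi)]=1$ divides $\chi(1)$ trivially. The only obstacle is conceptual rather than technical: one must cite the split-value formula with a consistent sign convention (as several conventions appear in the literature), after which the argument reduces to the parity bookkeeping above.
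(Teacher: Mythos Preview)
Your proof is correct and follows essentially the same route as the paper: both invoke the James--Kerber split-value formula to identify $\QQ(\chi)$ as $\QQ\bigl(\sqrt{(-1)^{(n-d)/2}\prod_i h_i}\bigr)$ with odd diagonal hooks $h_i$, and both deduce that the conductor is an odd squarefree integer. The only cosmetic difference is that the paper splits into the two cases $n-d\equiv 0,2\pmod 4$ and asserts the relevant congruence $\prod_i h_i\equiv \pm 1\pmod 4$ in each, whereas you supply the parity-counting argument (via the number $a$ of hooks $\equiv 3\pmod 4$) that establishes $m\equiv 1\pmod 4$ directly and handles both cases at once; this is precisely the justification the paper leaves implicit.
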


\begin{proof}
It is well known that the characters of $\Sym_n$ are all rational,
so the result is immediate in this case. Hence, we may assume that
$\chi \in \Irr(\Alt_n)$. For background on the ordinary irreducible
characters of $\Alt_n$, we refer the reader to \cite[\S5]{JK}.

Suppose that $\Psi$ is the irreducible character of $\Sym_n$ lying
over $\chi$, and let $\lambda$ be the partition of $n$ labeling
$\Psi$. If $\lambda$ is not self-conjugate, then $\chi =
\Psi_{\Alt_n}$ and the result is again trivial. Thus we assume that
$\lambda$ is self-conjugate. In this case, $ \Psi_{\Alt_n} = \chi +
\chi^\sigma, $ where $\sigma$ is an odd permutation in $\Sym_n$.

Let $h(\lambda)=(h_{11}^\lambda,\dots,h_{kk}^\lambda)$ denote the
partition of $n$ whose parts $h_{ii}^\lambda$ are the hook lengths
at the diagonal nodes $(i,i)$ of the Young diagram of $\lambda$.
Here $k$ is the length of the main diagonal. The conjugacy class of
$\Sym_n$ of cycle type $h(\lambda)$ splits into two conjugacy
classes of $\Alt_n$, say $h(\lambda)^+$ and $h(\lambda)^-$.

According to \cite[Theorem~2.5.13]{JK}, the values of $\chi$ (and of
$\chi^\sigma$) are rational at all elements, except possibly at
those belonging to $h(\lambda)^\pm$. In fact, one has
\[
\Q(\chi)=\Q\!\left(\sqrt{(-1)^{(n-k)/2}\,h_{11}^\lambda \cdots
h_{kk}^\lambda}\right).
\]

Let $d$ denote the square-free part of $h_{11}^\lambda \cdots
h_{kk}^\lambda$, that is, the product of the prime divisors
occurring with odd exponents. Note that all the hook lengths
$h_{ii}^\lambda$ are odd and that
\[
n-k = \sum_{i=1}^k (h_{ii}^\lambda-1)
\]
is even. We shall use $c(z)$ to denote the conductor of a sum of
roots of unity $z$, which is the smallest positive integer $t$ such
that $z\in \QQ_t$. If $n-k \equiv 0 \pmod{4}$, then
  $h_{11}^\lambda \cdots h_{kk}^\lambda \equiv 1 \pmod{4}$, and hence
  \[
  c(\chi) = c\!\left(\sqrt{h_{11}^\lambda \cdots h_{kk}^\lambda}\right)
  = c(\sqrt{d}) = d.
  \]
On the other hand, if $n-k \equiv 2 \pmod{4}$, then
  $h_{11}^\lambda \cdots h_{kk}^\lambda \equiv 3 \pmod{4}$, and hence
  \[
  c(\chi) = c\!\left(\sqrt{-h_{11}^\lambda \cdots h_{kk}^\lambda}\right)
  = c(\sqrt{-d}) = d.
  \]
It follows that $\nu_p(c(\chi))=1$ if $p$ divides $d$, and
$\nu_p(c(\chi))=0$ otherwise. This completes the proof.
\end{proof}

We now prove Theorem \ref{thm:main-evidence} for finite general
linear groups $G=\GL_n(q)$, where $n \geq 1$ and $q$ is a prime
power, with a natural module $V = \FF_q^n = \langle e_1, \ldots ,e_n
\rangle_{\FF_q}$. It is convenient for us to use the Dipper-James
classification of complex irreducible characters of $G$, as
described in \cite{JamesGL}. Namely, every $\chi\in \Irr(G)$ can be
written uniquely, up to a permutation of the pairs
$\{(s_1,\lam_1),\dots,(s_m,\lam_m)\}$, in the form
%(but we suppress the part $\uparrow G_n$ at the end of the notation therein)
\begin{equation}\label{gl10}
  \chi = S(s_1,\lam_1) \circ S(s_2,\lam_2) \circ \ldots \circ S(s_m,\lam_m).
\end{equation}
Here, $s_i \in \overline{\FF_q}^\times$ has degree $d_i$ over
$\FF_q$, $\lam_i \vdash k_i$, $\sum^m_{i=1}k_id_i = n$, and the $m$
elements $s_i$ have pairwise distinct minimal polynomials over
$\FF_q$. In particular, $S(s_i,\lam_i)$ is an irreducible character
of $\GL_{k_id_i}(q)$. Furthermore, there is a parabolic subgroup
$P_\chi = U_\chi L_\chi$ of $G$ with Levi subgroup $L_\chi =
\GL_{k_1d_1}(q) \times \ldots \times \GL_{k_md_m}(q)$ and unipotent
radical $U_\chi$. The (outer) tensor product
$$\psi := S(s_1,\lam_1) \otimes S(s_2,\lam_2) \otimes \ldots \otimes S(s_m,\lam_m)$$
is an $L_\chi$-character, and $\chi$ is obtained from $\psi$ via the
Harish-Chandra induction $R^{G}_{L_\chi}$, i.e. we first inflate
$\psi$ to a $P_\chi$-character and then induce it to $G$.

\begin{lem}\label{act1}
In the above notation, assume that the integer $N \geq 1$ is
divisible by the order of every $s_i$, $1 \leq i \leq m$, and that
$\sigma \in \Gal(\QQ_N/\QQ)$ sends a primitive $N^{\mathrm{th}}$
root of unity $\zeta \in \CC^\times$ to $\zeta^k$, $1 \leq k < N$.
Then for the character $\chi$ in \eqref{gl10},
$$\sigma(\chi) = S(s_1^k,\lam_1) \circ S(s_2^k,\lam_2) \circ \ldots \circ S(s_m^k,\lam_m).$$
\end{lem}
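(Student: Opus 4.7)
The plan is to reduce the claim to the single-pair case $m=1$ using Galois equivariance of Harish-Chandra induction, and then settle the reduced statement via Lusztig's Jordan decomposition of characters. Recall from the setup that $\chi = R^G_{L_\chi}(\psi)$ where $\psi = S(s_1,\lam_1) \otimes \cdots \otimes S(s_m,\lam_m)$ is an outer tensor product on the Levi $L_\chi$. Harish-Chandra induction factors as inflation from $L_\chi$ to the parabolic $P_\chi$ followed by ordinary induction from $P_\chi$ to $G$; each step is given by an explicit rational-coefficient formula, so it commutes with every $\tau \in \Gal(\overline{\QQ}/\QQ)$. Since Galois acts componentwise on outer tensor products, one obtains
\[\sigma(\chi) = R^G_{L_\chi}\!\bigl(\sigma(S(s_1,\lam_1)) \otimes \cdots \otimes \sigma(S(s_m,\lam_m))\bigr),\]
and the task reduces to proving $\sigma(S(s,\lam)) = S(s^k,\lam)$ for a single pair with $s$ of degree $d$ over $\FF_q$ and $\lam \vdash k$.

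For that single-pair statement, I would invoke the Jordan decomposition of characters. Viewing $\GL_{kd}(q)$ as self-dual, the character $S(s,\lam)$ lies in the rational Lusztig series attached to the semisimple class of the block-diagonal element with eigenvalues $s, s^q, \ldots, s^{q^{d-1}}$ (each of multiplicity $k$), and corresponds there to the unipotent character $\chi_\lam$ of the centralizer $\GL_k(\FF_{q^d})$. The Galois element $\sigma : \zeta \mapsto \zeta^k$ takes any linear character $\theta$ of a torus $T$ to $\theta^k$, and by the Deligne--Lusztig formalism one has $\sigma(R^G_T(\theta)) = R^G_T(\theta^k)$. Under the standard duality identifying $\Irr(T)$ with the dual torus $T^*$, the map $\theta \mapsto \theta^k$ corresponds to the $k$-th power map on $T^*$, so $\sigma$ sends the Lusztig series indexed by $s$ to that indexed by $s^k$. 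As unipotent characters of a general linear group are integer-valued, $\chi_\lam$ is fixed by $\sigma$, and the uniqueness of Jordan decomposition then yields $\sigma(S(s,\lam)) = S(s^k,\lam)$.

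The main obstacle is the duality identification: one must verify that the Galois action $\theta \mapsto \theta^k$ on torus characters corresponds, under the $T$--$T^*$ duality, to $s \mapsto s^k$ on the dual torus. This is routine for $\GL_n$ thanks to the concrete presentation of each maximal torus as a product of groups $\FF_{q^{d_i}}^\times$, but it still needs to be tracked carefully across the parametrization. One also has to observe that $s^k$ still has degree $d$ over $\FF_q$ --- which follows from $\gcd(k,\ord(s)) = 1$, itself guaranteed by $\gcd(k,N) = 1$ --- so that $S(s^k,\lam)$ is a legitimate Dipper--James label for a character of $\GL_{kd}(q)$.
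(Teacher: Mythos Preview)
Your reduction to $m=1$ via Galois-equivariance of Harish-Chandra induction is exactly what the paper does, and your observation that $s^k$ retains degree $d$ over $\FF_q$ is also the paper's opening step. The difference is in how the single-pair case is handled. You invoke the full Jordan decomposition: $\sigma$ permutes Lusztig series by $s \mapsto s^k$ and fixes the unipotent label $\chi_\lam$, so $S(s,\lam) \mapsto S(s^k,\lam)$. The paper instead performs one further reduction: when $d>1$ it uses that $S(s,\lam)$ is Lusztig-induced from the character $S(s,\lam)$ of the Levi $\GL_{k}(q^{d}) \leq \GL_{kd}(q)$, and that Lusztig induction commutes with Galois (Green functions are rational). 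This reduces to $d=1$, where the Jordan decomposition is the elementary identity $S(s,\lam)=S(1,\lam)\cdot\hat{s}$ with $S(1,\lam)$ rational unipotent and $\hat{s}$ a linear character of $G/[G,G]$ of order $o(s)$; then $\sigma(\hat{s})=\hat{s}^k=\widehat{s^k}$ is immediate.

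Your route is correct and more conceptual, but the very obstacle you flag---checking that $\theta \mapsto \theta^k$ on torus characters matches $s \mapsto s^k$ under the $T$--$T^*$ duality---is precisely what the paper's extra reduction sidesteps: after passing to the Levi where $s$ is central, the duality question collapses to a statement about a single linear character of a cyclic quotient. So the paper trades one use of Jordan decomposition for one use of Lusztig induction, gaining a cleaner endgame. (A cosmetic point: you reuse the symbol $k$ for both the Galois exponent and the size $|\lam|$; keep them separate when writing this up.)
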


\begin{proof}
First we show that $s_i^k$ also has degree $d_i$ over $\FF_q$, for
any $i$. Indeed, $s_i \in \FF_{q^{d_i}}$ implies that $s_i^k \in
\FF_{q^{d_i}}$, and hence $s_i^k$ has degree $d'|d_i$. Conversely,
as $\gcd(k,N)=1$, we can find $l \in \ZZ_{\geq 1}$ such that
$N|(kl-1)$. Now $s_i=s_i^{kl} \in \FF_{q^{d'}}$, and so $d_1|d'$,
whence $d'=d_i$.

Since the Harish-Chandra induction $R^G_{L_\chi}$ commutes with
Galois action, it suffices to prove the claim in the case $m=1$.
Now, if $d_1 > 1$, then $\chi=S(s_1,\lambda_1)$ is Lusztig induced
from the character $S(s_1,\lambda_1)$ of the Levi subgroup $L=
\GL_{k_1}(q^{d_1})$ of $G=\GL_{k_1d_1}(q)$, see \cite[p. 116]{FS}.
Since the Lefschetz numbers are all integers (see e.g. \cite[p.
140]{DM}), Green functions involved in the Lusztig induction $R^G_L$
are rational-valued, and thus $R^G_L$ also commutes with Galois actions.
Thus it suffices to prove the claim in the case $m=d_1=1$.  In the
latter case, $\chi=S(1,\lambda_1)\hat{s_1}$, where $S(1,\lambda_1)$
is a rational (unipotent) character, and $\hat{s_1}$ is a linear
character of $G/[G,G]$ of order equal to the order of $s_1$, see
\cite[(1.16)]{FS}, so we are done.
\end{proof}

\begin{lem}\label{sgr}
Let $p$ be a prime and $A = {\sf C}_{p^n} \times {\sf C}_p$ for some
$n \geq 1$. If $B$ is a subgroup of $A$, then there is some $1 \leq
m \leq n$ such that $B \cong \sC_{p^m}$ or $B \cong \sC_{p^m} \times
\sC_p$.
\end{lem}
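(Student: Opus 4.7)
The plan is to use the classification of finite abelian $p$-groups together with two elementary observations about the socle and the Frattini-type quotient of $A$. Since $A$ is abelian, so is $B$, and by the fundamental theorem of finite abelian groups we may write $B \cong \sC_{p^{a_1}} \times \cdots \times \sC_{p^{a_k}}$ with $a_1 \ge a_2 \ge \cdots \ge a_k \ge 1$ (the case $B=1$ being trivial). The goal is to show $k \le 2$, that $a_k = 1$ whenever $k = 2$, and that $a_1 \le n$.

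The first step is to bound the number of cyclic factors. Consider the $p$-torsion subgroup $A[p] := \{x \in A : px = 0\}$. Since $A = \sC_{p^n} \times \sC_p$, we have $A[p] \cong \sC_p \times \sC_p$, a two-dimensional $\FF_p$-vector space. The inclusion $B \le A$ gives $B[p] \le A[p]$, hence $\dim_{\FF_p} B[p] \le 2$. On the other hand, $B[p] \cong \sC_p^k$, so $k \le 2$, which already yields that $B$ is either trivial, cyclic, or of the form $\sC_{p^{a_1}} \times \sC_{p^{a_2}}$.

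The second step rules out $a_2 \ge 2$. Here I would look at $pA$. Since $A = \sC_{p^n} \times \sC_p$, we have $pA \cong \sC_{p^{n-1}}$ (with the convention that $\sC_{p^0}$ is trivial), which is cyclic. Because $pB$ is a subgroup of $pA$, the group $pB$ must also be cyclic. But if $a_2 \ge 1$, then $pB \cong \sC_{p^{a_1-1}} \times \sC_{p^{a_2-1}}$, and cyclicity forces $a_2 - 1 = 0$, i.e., $a_2 = 1$.

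The last step is the bound $a_1 \le n$, which is immediate: the exponent of $B$ divides the exponent of $A$, and the latter equals $p^n$. Setting $m := a_1$ then gives $B \cong \sC_{p^m}$ or $B \cong \sC_{p^m} \times \sC_p$ with $1 \le m \le n$, as required. I do not anticipate any real obstacle; the whole argument is a short structural computation inside $A$.
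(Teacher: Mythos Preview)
Your proof is correct and takes a genuinely different route from the paper. The paper argues concretely: it fixes a decomposition $A = A_1 \times A_2$ with $A_1 \cong \sC_{p^n}$, $A_2 \cong \sC_p$, and considers the projection $\pi:A \to A_1$. If $\pi|_B$ is injective, $B$ is cyclic of order dividing $p^n$; otherwise $A_2 \leq B$, and then $B = A_2 \times (A_1 \cap B)$. Your argument instead uses structural invariants: the $p$-torsion $A[p]$ bounds the number of invariant factors of $B$, and the cyclicity of $pA$ forces the second invariant factor to be $\sC_p$. The paper's approach is slightly quicker for this particular $A$, exploiting the explicit splitting; your approach is more uniform and would transfer verbatim to classify subgroups of any abelian $p$-group via the chain $A \supseteq pA \supseteq p^2A \supseteq \cdots$ together with the socle.
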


\begin{proof}
Write $A = A_1 \times A_2$ with $A_1 = \langle x \rangle \cong
\sC_{p^n}$ and $A_2 = \langle y \rangle \cong \sC_p$, and consider
the natural projection $\pi:A \to A_1$ with kernel $A_2$. If
$\pi|_B$ is injective then $B$ is cyclic of order $p^m$ with $m \leq
n$, and we are done. Otherwise $1 \neq \Ker(\pi|_B)$, so $y \in B$,
whence $B = A_2 \times (A_1 \cap B)$, and we are done again.
\end{proof}

\begin{thm}\label{main-gl}
Theorem \ref{thm:main-evidence} holds for finite general linear
groups $G = \GL_n(q)$ and finite general unitary groups $\GU_n(q)$.
\end{thm}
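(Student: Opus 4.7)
The plan is to combine the Dipper-James parameterization \eqref{gl10} with the Galois action of Lemma~\ref{act1}, exploiting the Harish-Chandra structure $\chi = R^G_{L_\chi}(\psi)$ where $\psi := \bigotimes_i S(s_i,\lambda_i)$. Since Harish-Chandra induction commutes with Galois action, $\QQ(\chi) \subseteq \QQ(\psi)$ and $\chi(1) = \binom{n}{k_1d_1,\ldots,k_md_m}_q \cdot \psi(1)$. Set $a := \nu_p(c(\chi))$ and $c := \nu_p([\QQ_{p^a}(\chi):\QQ_p(\chi)])$; the goal is $p^c \mid \chi(1)$. The extension $\QQ_{p^a}(\chi)/\QQ_p(\chi)$ is a $p$-extension, cyclic for odd $p$ and either cyclic or of the form $\sC_{2^{c-1}}\times \sC_2$ for $p=2$ by Lemma~\ref{sgr}.

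By Lemma~\ref{act1}, $\sigma_k$ lies in $\Gal(\QQ_{p^a}(\chi)/\QQ_p(\chi))$ precisely when $k \equiv 1 \pmod{p}$ (automatic for $p = 2$) and the assignment $s \mapsto s^k$ permutes the Dipper-James multiset $\{[(s_i,\lambda_i)]\}$ up to Frobenius equivalence. I would split the analysis according to whether $\sigma_k \in \Stab(\psi)$ (so it fixes each component) or moves components. When $\sigma_k$ has a cycle of length $p^r$ on the components, it forces $p^r$ identical factors $\GL_{k'd'}(q)$ in the Levi $L_\chi$; a lifting-the-exponent (LTE) calculation---using the identity $\nu_p(\binom{n}{m}_q) = \nu_p\binom{n/e}{m/e}$ when $p \mid q^e-1$ and $e \mid m,n$---extracts at least $p^r$ from the Gaussian $q$-multinomial. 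When $\sigma_k \in \Stab(\psi)$ acts nontrivially on some $s_i$, the order $|s_i|$ is divisible by a large power of $p$, and an analogous LTE computation applied to the atomic degree $S(s,\lambda)(1) = [\GL_{kd}(q):\GL_k(q^d)]_{\ell'} \cdot f_\lambda(q^d)$ extracts the required $p$-power from $\prod_{j=1}^{kd}(q^j-1)/\prod_{j=1}^{k}(q^{jd}-1)$ (noting that $p \mid q^d-1$). For $p = 2$, Lemma~\ref{sgr} lets one handle each generator of $\sC_{2^{c-1}} \times \sC_2$ separately.

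For $\GU_n(q)$, I would use the Fong-Srinivasan parameterization (a Dipper-James-style classification for finite unitary groups), whose Galois action obeys an analog of Lemma~\ref{act1} via Ennola duality; the permutation-vs.-within-component split applies mutatis mutandis with $q$ replaced by $-q$ in the relevant polynomial expressions. The main obstacle will be the bookkeeping in the mixed case where $\sigma_k$ both permutes components and acts nontrivially within them---especially for $p = 2$ when the Galois group is non-cyclic---to ensure that the $p$-divisibility contributions from the permutation part and the within-component part add up correctly (without overlap) to yield $p^c \mid \chi(1)$. This is precisely the role of the decomposition afforded by Lemma~\ref{sgr}.
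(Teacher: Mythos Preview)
Your overall strategy matches the paper's: Dipper--James parametrisation, Lemma~\ref{act1} for the Galois action, and a split into orbit-on-components versus action-within-a-Frobenius-class, with the structure of $\Gal(\QQ_{p^a}(\chi)/\QQ_p(\chi))$ controlled via Lemma~\ref{sgr}. The divisibility extractions you outline are essentially the paper's steps (a1) and (a2). The gap is in the $p=2$ non-cyclic case, and your proposed fix---``handle each generator of $\sC_{2^{c-1}}\times\sC_2$ separately''---does not close it. Knowing that an element of order $2^{c-1}$ forces $2^{c-1}\mid\chi(1)$ and that an element of order $2$ forces $2\mid\chi(1)$ gives you nothing beyond $2^{c-1}\mid\chi(1)$: the two divisibilities need not be independent. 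Lemma~\ref{sgr} only tells you the abstract group type; it provides no mechanism for combining contributions from distinct generators.

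The paper resolves this with two ingredients you are missing. First, a sharpened single-element bound: when $p=2$ and a Galois element $\sigma$ has order $2^{a_1}$ with $a_1\ge 2$, one actually gets $2^{a_1+1}\mid\chi(1)$, not merely $2^{a_1}$. This extra factor of $2$ comes from a finer factorisation of the relevant $q$-binomial quotient (splitting $N_1$ so that an additional even factor $(q^{k\cdot 2^{b-1}}-1)/(q^k-1)$ appears), and it immediately settles $c\ge 3$ by applying it to an element of order $2^{c-1}\ge 4$. Second, for $c=2$ (so $\Gal\cong\sC_2\times\sC_2$), a direct combinatorial contradiction: if $4\nmid\chi(1)$, then each of the three nontrivial Galois elements must act on the set $S'=\{(s_i^{q^j},\lambda_i)\}$ as a single transposition with all other points fixed; but then two of them are the same permutation, so their product acts trivially on $S'$ while being a nontrivial element of the Galois group. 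Neither step is an LTE computation, and neither is suggested by ``handle each generator separately''.
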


\begin{proof}
Since the statement is trivial when $a=1$ or $n=1$, we may assume
that $a,n \geq 2$.

The case of $\GU_n(q)$ is entirely similar to that of $\GL_n(q)$,
changing only $q$ to $-q$  (in character degrees and subgroup
orders, as well as in the definition of $[t]$ for $t \in
\overline{\FF_q}^\times$) and Harish-Chandra induction to Lusztig
induction. So we will provide the details only for $G = \GL_n(q)$.
Note that $p \nmid q$ since \eqref{gl10} shows that
$c(\chi)$ divides $N :=\mathrm{lcm}(o(s_1), \ldots ,o(s_m))$ which divides
$\prod^n_{i=1}(q^i-1)$. So $q$ is power of some prime $r
\neq p$.

\smallskip
(a) Here we show that if a $p$-element $\sigma \in
\Gal(\QQ_N/\QQ_p(\chi))$ induces an element of
$\Gal(\QQ_{p^a}(\chi)/\QQ_p(\chi))$ of order $p^{a_1}$ for some $a_1
\geq 0$, then $\chi(1)$ is divisible by $p^{a_1}$, in fact by
$2^{a_1+1}$ if $p=2 \leq a_1$.

The case $a_1=0$ is trivial. so we may assume $a_1 \geq 1$, and note
that $\chi$ is $\sigma$-invariant. Writing $\chi$ in the form
\eqref{gl10} and applying Lemma \ref{act1} with
$\sigma(\zeta)=\zeta^f$, we see that the induced map $\tau:
S(s,\lam) \mapsto S(s^f,\lam)$ acts on the set $S$ of the characters
$S(s_i,\lam_i)$, $1 \leq i \leq m$.

\smallskip
(a1) First we show that if some $\tau$-orbit $\OC$ on $S$ has length
$p^b$ with $b \geq 0$, say $S(s_1,\lam_1) \in \OC$, and $p$ divides
$q^k-1$ with $k:=k_1d_1$, then both
$$N_1:=\frac{|\GL_{kp^b}(q)|_{r'}}{|\GL_{k}(q)|_{r'}^{p^b}}$$
and $\chi(1)$ are divisible by $p^b$, in fact by $p^{b+1}$ if $p = 2
\leq b$.

Indeed, the statement is obvious if $b=0$. Assume $b \geq 1$. Then
for any $1 \leq i \leq b$, $p|(q^{kp^{i}}-1)/(q^{kp^{i-1}}-1)$,
hence $p^b$ divides $(q^{kp^{b}}-1)/(q^{k}-1)$. Since
$$N_1:= N_2 \cdot \frac{q^{kp^b}-1}{q^k-1}, \mbox{ where }N_2:= \frac{|\GL_{kp^b-1}(q)|_{r'}}{|\GL_{k}(q)|_{r'}^{p^b-1}|\GL_{k-1}(q)|_{r'}}$$
is an integer, $p^b|N_1$. Next, $N_1$ divides
$$\frac{|\GL_n(q)|_{r'}}{\prod^m_{i=1}|\GL_{k_id_i}(q)|_{r'}}.$$
So $N_1|\chi(1)$, and hence $p^b|\chi(1)$.

Assume in addition that $p=2 \leq b$. Then we can express $N_2$ as
$$\biggl(\frac{|\GL_{k\cdot 2^{b-1}-1}(q)|_{r'}}{|\GL_{k}(q)|_{r'}^{2^{b-1}-1}|\GL_{k-1}(q)|_{r'}}\biggr)^2\cdot
    \frac{|\GL_{k\cdot 2^{b}-1}(q)|_{r'}}{|\GL_{k\cdot 2^{b-1}}(q)|_{r'}|\GL_{k\cdot 2^{b-1}-1}(q)|_{r'}}
 \cdot \frac{q^{k \cdot 2^{b-1}}-1}{q^k-1},$$
where the first two factors are integers, and the third one is an
even integer. So $2^{b+1}$ divides both $N_1$ and $\chi(1)$.

\smallskip
(a2) For any $t \in \overline{\FF_q}^\times$ of degree say $l$ over
$\FF_q$, let $[t]$ denote the set $\{t,t^q, \ldots,t^{q^{l-1}}\}$.
Suppose that some $\tau$-orbit $\OC$ on $S$ has length $p^b$ with $b
\geq 0$, say $S(s_1,\lam_1) \in \OC$, and $p|(q^d-1)$ with $d:=d_1$.
As $\tau^{p^b}$ fixes $S(s_1,\lam_1)$,  $\tau^{p^b}$ acts on the set
$[s_1]$. Suppose in addition
that some $\tau^{p^b}$-orbit on $[s_1]$ has length $p^c$ for some $c \geq 0$. Then we
show that $p^{b+c}|\chi(1)$, and in fact $2^{b+c+1}|\chi(1)$ if $p =
2 \leq b+c$.

Indeed, write $k:=k_1$. If $c=0$, then we are done by (a1). So we
assume that $c \geq 1$. By (a1),
$$N_1:=\frac{|\GL_{kdp^b}(q)|_{r'}}{|\GL_{kd}(q)|_{r'}^{p^b}}$$
is divisible by $p^b$, in fact by $p^{b+1}$ if $p=2 \leq b$.

Since the maps $\tau^{p^b}: x \mapsto x^{f^{p^b}}$ and $x \mapsto
x^q$ have commuting actions on $[s_1]$, with the latter acting
transitively, we see that $p^c$ divides the size of any
$\tau^{p^b}$-orbit on $[s_1]$, which has cardinality $d$. Thus
$p^c|d$; write $d=p^cd_1$. As $p \nmid q$, we have $q^d \equiv
q^{d_1} \pmod{p}$, and so $p|(q^{d_1}-1)$. Hence $p$ divides each
$q^{id_1}-1$ with $1 \leq i \leq p^c-1$, and so $p^{p^c-1}$ divides
$(q-1)(q^2-1) \ldots (q^{d-1}-1)$, which is a divisor of
$$\frac{(q-1)(q^2-1) \ldots (q^{kd}-1)}{(q^d-1)(q^{2d}-1) \ldots (q^{kd}-1)}.$$
The latter divides the degree of $S(s_1,\lambda_1)$, see \cite[Lemma
5.7(ii)]{KT2}. Also, $c \geq 1$ implies that $p^c-1 \geq 2^c-1 \geq
c$. It follows that the degree of $S(s_1,\lam_1)$ is divisible by
$p^c$, in fact by $p^{c+1}$ if $p=2 \leq c$.

Thus $p^{b+c}|\chi(1)$; moreover $p^{b+c+1}|\chi(1)$ if $p=2 \leq
\max(b,c)$. It remains to consider the case $p=2$ and $b=c=1$. In
this case $\chi(1)$ is divisible by
$N_1\bigl(\deg(S(s_1,\lambda_1))\bigr)^2$, so by $2^3$ as stated.

\smallskip
(a3) Now, if $\tau$ has some orbit of length $p^b$ on $S$ containing
$S(s_j,\lam_j)$ with $p|o(s_j)$ and $\tau^{p^b}$
has some orbit of length
$p^c$ on the set $[s_j]$ with $b+c \geq a_1$, then we are done by
(a2). So we may assume that $\tau_1:=\tau^{p^{a_1-1}}$ fixes every
$s_i$ with $p|o(s_i)$. Thus if $\tau_1(x)=x^{f_1}$, then $f_1-1$ is
divisible by $o(s_i)$ whenever $p|o(s_i)$. On the other hand,
\eqref{gl10} shows that $c(\chi)$ divides $\mathrm{lcm}(o(s_1),
\ldots,o(s_m))$. It follows that $p^a=c(\chi)_p$ divides $f_1-1$.
But in this case, $\tau_1$ acts trivially on both $\QQ_{p^a}$ and
$\QQ(\chi)$, contrary to the assumption that $\tau_1$ has order $p$
in $\Gal(\QQ_{p^a}(\chi)/\Q_p(\chi))$.

\smallskip
(b) By \cite[Theorem 3.4.3]{We}, the restriction to $\QQ_{p^a}$
gives an injective homomorphism $\Gal(\QQ_{p^a}(\chi)/\QQ_p(\chi))
\to \Gal(\QQ_{p^a}/\QQ_p)$, and the latter is isomorphic to
$\sC_{p^{a-1}}$ if $p > 2$ and to $\sC_{2^{a-2}} \times \sC_2$ if
$p=2$. It follows that $|\Gal(\QQ_{p^a}(\chi)/\QQ_p(\chi))|=p^e$
with $e \leq a-1$.

It suffices to prove the statement in the case $e \geq 1$. Now, if
$\Gal(\QQ_{p^a}(\chi)/\QQ_p(\chi))$ is cyclic, then we can find an
element of order $p^e$ in $\Gal(\QQ_{p^a}(\chi)/\QQ_p(\chi))$, and
so $p^e|\chi(1)$ by (a), and we are done.

By Lemma \ref{sgr}, it remains to consider the case $p=2 \leq e$ and
$\Gal(\QQ_{p^a}(\chi)/\QQ_p(\chi)) \cong \sC_{2^{e-1}} \times
\sC_2$. Now, if $e \geq 3$, then then we can find an element of
order $2^{e-1} \geq 4$ in $\Gal(\QQ_{p^a}(\chi)/\QQ_p(\chi))$, and
so $2^{e+1}|\chi(1)$ by (a), and we are done again. Suppose now that
$e=2$ but $4 \nmid \chi(1)$.  The analysis in (a) then shows that,
for every $\tau$ induced by a nontrivial element in
$\Gal(\QQ_{p^a}(\chi)/\QQ_p(\chi))$, $\tau$ has exactly one orbit of
length $2$ on
$$S':= \{(s_i^{q^j},\lam_i) \mid 1 \leq i \leq m, 0 \leq j \leq d_j-1\},$$
with all other orbits having length $1$. However, since $4 \nmid
\chi(1)$, the same holds for the induced action of
$\Gal(\QQ_{p^a}(\chi)/\QQ_p(\chi)) = \langle
\sigma_1,\sigma_2\rangle\cong \sC_2^2$. Thus if $\tau_i$ denotes the
induced action of $\sigma_i$, $i = 1,2$, then we may assume that
both $\tau_{1,2}$ act as the same $2$-cycle on $S'$. But then the
automorphism $\sigma_1\sigma_2$ of order $2$ acts trivially on $S'$,
a contradiction.
\end{proof}

We will need the following refinement of \cite[Lemma 4.2]{Navarro-Tiep21}:

\begin{pro}\label{pro:ext1}
Let $p$ be a prime and let $N$ be a normal subgroup of a finite group $G$ with $p \nmid |G/N|$.
Let $\theta \in \Irr(N)$, $J:=\mathrm{Stab}_G(\theta)$, and let $\FF$ by any extension of $\QQ$ inside $\QQ_{|G|}$.
Then the following statements hold.
\begin{enumerate}[\rm(i)]
\item Suppose that $\chi \in \Irr(G|\theta)$, $a = \nu_p(c(\chi))$, and $b=\nu_p(c(\theta))$. If $a,b \geq 1$
or $p=2$, then $a=b$. If $a =0$ then $b \leq 1$, and if $b=0$ then $a \leq 1$. Furthermore,
$p \nmid [\FF(\theta):(\FF(\chi) \cap \FF(\theta))]$. In fact, every prime divisor of
$[\FF(\theta):(\FF(\chi_N) \cap \FF(\theta))]$ divides $|G/N|$.
\item Suppose that all the characters in $\Irr(J|\theta)$ have the same degree (this holds if, for instance, $J/N$ is cyclic). Then,
there exists some $\chi \in \Irr(G|\theta)$ such that $p \nmid [\FF(\chi):(\FF(\chi) \cap \FF(\theta))]$.
\end{enumerate}
\end{pro}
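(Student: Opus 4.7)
The plan for part (i) is to combine Galois-equivariant Clifford theory with a subfield lemma for cyclotomic extensions. Write $\chi_N = e(\theta_1 + \cdots + \theta_r)$ where $T = \{\theta_1,\ldots,\theta_r\}$ is the $G$-orbit of $\theta$, so $r = |G:J|$ divides $|G/N|$. The subgroup $\Gal(\FF_{|N|}/\FF(\chi_N))$ is exactly the setwise stabilizer of $T$ inside $\Gal(\FF_{|N|}/\FF)$; applying orbit--stabilizer to its action on $T$ yields that $[\FF(\theta):\FF(\chi_N)\cap\FF(\theta)]$ divides $r$, so every prime divisor of this index divides $|G/N|$. Since $\FF(\chi_N)\subseteq\FF(\chi)$, the analogous statement for $[\FF(\theta):\FF(\chi)\cap\FF(\theta)]$ follows, yielding in particular coprimality with $p$.

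To compare $a$ and $b$, I pass to $\QQ$ and exploit that for an abelian extension $L/\QQ$ the $p$-part of $c(L)$ equals $c(L\cap\QQ_{p^\infty})$. The key technical ingredient is a subfield lemma: if $L\subseteq\QQ_{p^b}$ has conductor $p^b$ (with $b\ge 1$) and $[L:M]$ is coprime to $p$, then $M$ also has conductor $p^b$. This follows because $\Gal(\QQ_{p^b}/\QQ_{p^{b-1}})$ has prime order $p$ (or $2$ when $p=2$, $b\ge 2$), so inside the abelian group $(\ZZ/p^b)^\times$ any $p'$-index supergroup of a subgroup avoiding this order-$p$ piece must itself avoid it. Applying the lemma to $M = \QQ(\chi)\cap\QQ(\theta)\cap\QQ_{p^\infty}$ inside $L = \QQ(\theta)\cap\QQ_{p^\infty}$ (using the coprimality from the previous paragraph) gives $a\ge b$ when $b\ge 1$. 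For the reverse inequality, the plan is to establish $\QQ(\chi)\subseteq\QQ(\theta)\cdot\QQ_{|G/N|}$ by choosing a canonical Brauer-type extension $\hat\theta\in\Irr(J)$ of $\theta$ when one exists (so that $\chi=\psi^G$ with $\QQ(\psi)\subseteq\QQ(\theta)\cdot\QQ_{|J/N|}$ via Cram's Lemma~\ref{lem:Cram}), and by passing to a suitable central cover of $J/N$ otherwise; intersecting with $\QQ_{p^\infty}$ then yields $a\le b$, using $\QQ_{|G/N|}\cap\QQ_{p^\infty}=\QQ$. The edge cases ($a=0$ or $b=0$, and the uniform equality at $p=2$) follow from the same analysis, using that every odd-degree subfield of $\QQ_{2^\infty}$ over $\QQ$ is $\QQ$ itself (since $(\ZZ/2^n)^\times$ is a $2$-group).

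For part (ii), the plan is to count via the Frobenius reciprocity identity $\sum_{\psi\in\Irr(J|\theta)}\psi(1)^2 = |J:N|\theta(1)^2$. Under the same-degree hypothesis, all characters in $\Irr(J|\theta)$ share a common degree $e\theta(1)$, so $|\Irr(J|\theta)|=|J:N|/e^2$ divides $|J:N|$ and hence is coprime to $p$. By Clifford correspondence $|\Irr(G|\theta)|=|\Irr(J|\theta)|$ is likewise coprime to $p$. The group $\Gal(\FF_{|G|}/\FF(\theta))$ permutes $\Irr(G|\theta)$ (since it fixes $\theta$ and commutes with $G$-conjugation, preserving each $\theta_i$), and the orbit of any $\chi$ has size $[\FF(\chi):\FF(\chi)\cap\FF(\theta)]$; since the sum of orbit sizes equals $|\Irr(G|\theta)|$, which is coprime to $p$, at least one orbit has $p'$-size, and any $\chi$ in such an orbit meets the requirement.

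The main obstacle will be rigorously establishing the containment $\QQ(\chi)\subseteq\QQ(\theta)\cdot\QQ_{|G/N|}$ in part (i), particularly when $\theta$ does not extend to $J$; this is where the proof requires the most care, typically via a central-cover construction that exploits $p\nmid|J/N|$ to control the field of values of the associated projective representation.
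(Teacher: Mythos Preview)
Your arguments for the third and fourth statements of (i) and for all of (ii) are correct and run parallel to the paper's: both exploit that the relevant Galois group acts on a set of size coprime to $p$ (the $G$-orbit $T$ of $\theta$ in (i), the set $\Irr(G|\theta)$ in (ii)). The paper phrases this as ``a Sylow $p$-subgroup of the Galois group must fix a point'', while you phrase it via orbit counting; these are equivalent. Your remark that all Galois orbits on $T$ have equal size (because $\FF(\theta_i)=\FF(\theta)$ for every $G$-conjugate $\theta_i$) in fact gives the slightly sharper divisibility $[\FF(\theta):\FF(\chi_N)]\mid r$.

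The genuine gap is in your treatment of the conductor comparison $a$ versus $b$. The assertion that ``for an abelian extension $L/\QQ$ the $p$-part of $c(L)$ equals $c(L\cap\QQ_{p^\infty})$'' is false: take $L=\QQ(\sqrt{-5})$ and $p=2$, where $c(L)=20$ has $2$-part $4$, yet $L\cap\QQ_{2^\infty}=\QQ$ has conductor $1$. The $p$-part of the conductor records $p$-adic ramification, not the intersection with the $p$-power cyclotomic tower, so your subfield lemma cannot be fed the inputs you describe, and the deduction of $a\geq b$ collapses. The reverse inequality has the same defect (you again pass to $\QQ_{p^\infty}$-intersections at the end), and in addition the containment $\QQ(\chi)\subseteq\QQ(\theta)\cdot\QQ_{|G/N|}$ is left as a sketch; your appeal to Cram's lemma goes the wrong way (it bounds $[\QQ(\psi):\QQ(\chi)]$, not $\QQ(\chi)$ over $\QQ(\theta)$), and the ``central cover'' argument for the non-extendible case is not supplied. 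The paper does not attempt a self-contained proof of these conductor statements at all: it simply invokes \cite[Lemma~4.2(ii)]{Navarro-Tiep21} and \cite[Lemma~2.4(ii)]{Hung22}, the latter of which establishes directly (via a ramification argument, not via $\QQ_{p^\infty}$-intersections) that $\max\{a,b\}\geq 2$ forces $a=b$; combined with the fact that conductors are never $\equiv 2\pmod 4$, this yields all the listed cases.
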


\begin{proof}
(i) The first statement is just \cite[Lemma
4.2(ii)]{Navarro-Tiep21}. The second statement follows from
\cite[Lemma~2.4(ii)]{Hung22}, where it was shown that if
$\max\{a,b\}\geq 2$, then $a=b$.

For the third statement, by Clifford's theorem we have
$\chi_N = e\sum^t_{i=1}\theta_i$, where $t=[G:J]$ is coprime to $p$, and $\theta=\theta_1, \theta_2, \ldots,\theta_t$ are the $t$ distinct
$G$-conjugates of $\theta$; in particular, $\QQ(\theta_i)=\QQ(\theta)$ for all $i$. Now let $\Gamma := \Gal(L/K)$, where
$L :=\FF(\chi,\theta)$ and $K:=\FF(\chi)$, and let
$\Gamma_p$ denote the Sylow $p$-subgroup of $\Gamma$. Then, any
$\sigma \in \Gamma_p$ fixes $\chi_N$, and so it permutes
$\theta_1,...,\theta_t$. Thus the $p$-group
$\Gamma_p$ acts on the set $\{\theta_1, \ldots,\theta_t\}$
of $p'$-size, and hence $\Gamma_p$ fixes
some $\theta_j$. Setting $M:=\FF(\theta_j) = \FF(\theta)$, we see that $MK$ is
contained in the fixed field $L^{\Gamma_p}$ which has $p'$-degree
$[\Gamma:\Gamma_p]$ over $K$ by Galois correspondence. It follows
that $p \nmid [MK:K] = [M:K \cap M] = [\FF(\theta):(\FF(\chi) \cap
\FF(\theta))]$.

The same proof using Sylow subgroups of $\Gal(\FF(\theta,\chi_N)/\FF(\chi_N))$ yields the fourth statement.

\smallskip
(ii) The assumption implies that all the characters $\gamma$ in $\Irr(J|\theta)$ have the same ramification index $e$:
$\gamma_N=e\theta$, and so, by the Clifford correspondence \cite[(6.11)]{Isaacs1}, if $\Irr(G|\theta)= \{\chi_1, \ldots,\chi_s\}$ then
$$[\theta^G,\chi_i]_G=[\theta,(\chi_i)_N]_N=e$$
for all $i$. Hence,
\begin{equation}\label{ext20}
  \theta^G = e\sum^s_{i=1}\chi_i,
\end{equation}
and thus
$$\bigl(\theta^G\bigr)_N=e\sum^s_{i=1}(\chi_i)_N = e^2s\sum^t_{j=1}\theta_j$$
in the notation of (i). It follows that $e^2st=|G/N|$, and so $p \nmid s$.

Now let $\Gamma := \Gal(L/K)$, where $L :=
\FF(\chi_1,\ldots,\chi_s,\theta)$ and $K:=\FF(\theta)$, and let
$\Gamma_p$ denote the Sylow $p$-subgroup of $\Gamma$. Then, any
$\sigma \in \Gamma_p$ fixes $\theta^G$, and so it permutes
$\chi_1,...,\chi_s$ by \eqref{ext20}. Thus the $p$-group
$\Gamma_p$ acts on the set $\{\chi_1, \ldots,\chi_s\}$
of $p'$-size, and hence $\Gamma_p$ fixes
some $\chi=\chi_i$. Setting $M:=\FF(\chi)$, we see that $MK$ is
contained in the fixed field $L^{\Gamma_p}$ which has $p'$-degree
$[\Gamma:\Gamma_p]$ over $K$ by Galois correspondence. It follows
that $p \nmid [MK:K] = [M:K \cap M] = [\FF(\chi):(\FF(\chi) \cap
\FF(\theta))]$.
\end{proof}

Now we can prove the following ``going-up and going-down'' result:

\begin{thm}\label{thm:ext}
Let $p$ be a fixed prime, $G$ a finite group, and $N$ a normal subgroup of $G$ such that $p \nmid |G/N|$.
Then the following statements hold.
\begin{enumerate}[\rm(i)]
\item Suppose Conjecture \ref{conj:main} holds for all irreducible characters of $N$ at the prime $p$.
Then Conjecture \ref{conj:main} holds for all irreducible characters of $G$ at the prime $p$.
\item Suppose that $G/N$ is solvable and
that Conjecture \ref{conj:main} holds for all irreducible characters of $G$ at the prime $p$.
Then Conjecture \ref{conj:main} holds for all irreducible characters of $N$ at the prime $p$.
\end{enumerate}
\end{thm}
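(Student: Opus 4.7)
For part (i), my plan is to fix an irreducible constituent $\theta$ of $\chi_N$ and to transfer the divisibility from $\theta$ to $\chi$ via Clifford theory combined with Proposition~\ref{pro:ext1}(i). Setting $b := \nu_p(c(\theta))$, the proposition forces either $b = 0$ (hence $a \le 1$, so $a = 1$ and $[\QQ_{p^a}(\chi):\QQ_p(\chi)] = 1$ trivially divides $\chi(1)$), or $a = b \geq 1$. In the latter case, the inductive hypothesis applied to $\theta \in \Irr(N)$ gives $[\QQ_{p^b}(\theta):\QQ_p(\theta)] \mid \theta(1)$, and since $\theta(1)$ divides $\chi(1)$ by Clifford, it suffices to show that $[\QQ_{p^a}(\chi):\QQ_p(\chi)]$ divides $[\QQ_{p^a}(\theta):\QQ_p(\theta)]$. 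Proposition~\ref{pro:ext1}(i), applied with $\FF = \QQ_p$ and with $\FF = \QQ_{p^a}$, makes both $[\QQ_p(\chi,\theta):\QQ_p(\chi)]$ and $[\QQ_{p^a}(\chi,\theta):\QQ_{p^a}(\chi)]$ coprime to $p$, so computing $[\QQ_{p^a}(\chi,\theta):\QQ_p(\chi)]$ by the two towers through $\QQ_{p^a}(\chi)$ and through $\QQ_p(\chi,\theta)$ and taking $p$-parts yields $[\QQ_{p^a}(\chi,\theta):\QQ_p(\chi,\theta)] = [\QQ_{p^a}(\chi):\QQ_p(\chi)]$. This in turn divides $[\QQ_{p^a}(\theta):\QQ_p(\theta)]$ because $[\QQ_{p^a}(\chi,\theta):\QQ_{p^a}(\theta)]$ divides $[\QQ_p(\chi,\theta):\QQ_p(\theta)]$ (restriction on Galois groups when enlarging the base field), which finishes (i).

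For part (ii), I induct on $|G/N|$. If $G/N$ is not of prime order, its solvability supplies $M$ with $N \lneq M \lneq G$, $M \nor G$, and both $|G/M|, |M/N| < |G/N|$, so the inductive hypothesis applied first to $(G,M)$ and then to $(M,N)$ reduces everything to the case $|G/N| = q$, a prime distinct from $p$. Fix $\theta \in \Irr(N)$ with $b = \nu_p(c(\theta)) \ge 1$; for any $\chi \in \Irr(G|\theta)$, if $a := \nu_p(c(\chi)) = 0$ then Proposition~\ref{pro:ext1}(i) forces $b \le 1$ and the conclusion is trivial, so I may assume $a \ge 1$, hence $a = b$. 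If the inertia group $J$ equals $N$, then $\chi := \theta^G$ is the unique element of $\Irr(G|\theta)$, $\chi(1) = q\theta(1)$, and $\QQ(\chi) \subseteq \QQ(\theta)$; the tower argument from (i) now \emph{identifies} $[\QQ_{p^a}(\chi):\QQ_p(\chi)]$ with $[\QQ_{p^a}(\theta):\QQ_p(\theta)]$, and stripping the $p'$-factor $q$ from the conjectural bound for $\chi$ gives the result. If instead $J = G$, then $G/N$ is cyclic, all characters in $\Irr(G|\theta)$ have degree $\theta(1)$, and Proposition~\ref{pro:ext1}(ii) produces a $\chi \in \Irr(G|\theta)$ with the required $p'$-index; since $J = G$ forces $\QQ(\theta) = \QQ(\chi_N) \subseteq \QQ(\chi)$, the tower argument again identifies the two indices, and $\chi(1) = \theta(1)$ closes the case.

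The main subtlety I anticipate is in the $J = G$ subcase, since Proposition~\ref{pro:ext1}(ii) as stated produces a $\chi$ tailored to a \emph{single} field $\FF$, whereas the tower comparison needs a single $\chi$ that behaves well simultaneously over both $\QQ_p$ and $\QQ_{p^a}$. My plan is to resolve this by applying the proposition with $\FF = \QQ$: the resulting bound $p \nmid [\QQ(\chi):\QQ(\chi) \cap \QQ(\theta)]$ automatically propagates to every abelian overfield $\FF$, because restriction makes $[\FF(\chi,\theta):\FF(\theta)]$ a divisor of $[\QQ(\chi,\theta):\QQ(\theta)]$. A single $\chi$ chosen over $\QQ$ therefore serves both $\FF = \QQ_p$ and $\FF = \QQ_{p^a}$ at once, which is exactly what the two-sided $p$-part comparison requires.
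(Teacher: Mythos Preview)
Your proof is correct and follows the same overall strategy as the paper: both parts rest on Proposition~\ref{pro:ext1}, with (i) using clause~(i) to pull the divisibility up from $\theta$ to $\chi$, and (ii) using clause~(ii) to pick a convenient $\chi$ over a given~$\theta$. The Galois bookkeeping differs in flavour. You compare the two $p$-power indices by computing $[\QQ_{p^a}(\chi,\theta):\QQ_p(\chi)]$ along two towers and taking $p$-parts; the paper instead identifies $[\QQ_{p^a}(\chi):\QQ_p(\chi)]$ with $[\QQ_{p^a}:\QQ_{p^a}\cap\QQ_p(\chi)]$ and proves the containment $\QQ_{p^a}\cap\QQ_p(\theta)\subseteq\QQ_{p^a}\cap\QQ_p(\chi)$ directly from $p\nmid[\QQ_p(\theta):\QQ_p(\chi)\cap\QQ_p(\theta)]$. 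For part~(ii) the paper stops at the reduction to $G/N$ cyclic and applies Proposition~\ref{pro:ext1}(ii) once with $\FF=\QQ_p$, then reruns the intersection argument with the roles of $\chi$ and $\theta$ swapped; this avoids your further reduction to prime index and the case split $J=N$ versus $J=G$. In particular, the subtlety you flag in your last paragraph---needing a single $\chi$ that works for two values of $\FF$---does not arise in the paper's approach, since its intersection step requires only the single input $p\nmid[\QQ_p(\chi):\QQ_p(\chi)\cap\QQ_p(\theta)]$. Your resolution via $\FF=\QQ$ and propagation is nonetheless valid.
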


\begin{proof}
(i) It suffices to prove the theorem in the case $\chi \in \Irr(G)$ has
$a:=\nu_p(c(\chi)) \geq 2$. By Proposition \ref{pro:ext1}(i), for any $\theta \in \Irr(N)$ lying under $\chi$ we have
$$\nu_p(c(\theta))=a,$$
furthermore,
$$p \nmid [M:K]$$
with $M:=\QQ_p(\theta)$ and $K:= \QQ_p(\chi) \cap \QQ_p(\theta)$.
Since
$$\begin{aligned}[\QQ_{p^a}M:\QQ_{p^a}K] & = [\QQ_{p^a}M:\QQ_{p^a}]/[\QQ_{p^a}K:\QQ_{p^a}]\\
    & = [M:(\QQ_{p^a}\cap M)]/[K:(\QQ_{p^a} \cap K)]\\
    & = [M:K]/[(\QQ_{p^a}\cap M):(\QQ_{p^a} \cap K)]\end{aligned}$$
is an integer, it follows that
$$p \nmid [(\QQ_{p^a}\cap \QQ_p(\theta)):(\QQ_{p^a} \cap \QQ_p(\chi) \cap \QQ_p(\theta))].$$
But  $\QQ_{p^a} \cap \QQ_p(\chi) \cap \QQ_p(\theta) \supseteq \QQ_p$ and $[\QQ_{p^a}:\QQ_p]= p^{a-1}$, so in fact
we have shown that
$$\QQ_{p^a}\cap \QQ_p(\theta) \subseteq \QQ_{p^a} \cap \QQ_p(\chi).$$
Denoting
$p^c:= [\QQ_{p^a}:(\QQ_{p^a}\cap \QQ_p(\theta))]$ and $p^d:= [\QQ_{p^a}:(\QQ_{p^a} \cap \QQ_p(\chi))]$,
we now have
\begin{equation}\label{ext21}
  c \geq d.
\end{equation}
By hypothesis, $p^c|\theta(1)$.  Next, $p \nmid |G/N|$ implies that $p \nmid \chi(1)/\chi(1)$, and
so $p^c|\chi(1)$. Hence $p^d|\chi(1)$ by \eqref{ext21}, and we are done.

\smallskip
(ii) It suffices to prove the theorem in the case $G/N$ is cyclic, and furthermore $\theta \in \Irr(N)$ has
$b=\nu_p(c(\theta)) \geq 2$. By Proposition \ref{pro:ext1}(i), for any $\chi \in \Irr(G|\theta)$ we have
$$\nu_p(c(\chi))=b.$$
Since $\mathrm{Stab}_G(\theta)/N$ is cyclic in our case, using Proposition \ref{pro:ext1}(ii)
with $\FF:=\QQ_p$, there exists some $\chi \in \Irr(G|\theta)$ such that
$$p \nmid [\QQ_p(\chi):(\QQ_p(\chi) \cap \QQ_p(\theta))].$$
Arguing as in (i), we see that
%Since
%$$\begin{aligned}[\QQ_{p^b}M:\QQ_{p^b}K] & = [\QQ_{p^b}M:\QQ_{p^b}]/[\QQ_{p^b}K:\QQ_{p^b}]\\
%    & = [M:\QQ_{p^b}\cap M]/[K:\QQ_{p^b} \cap K]\\
%    & = [M:K]/[\QQ_{p^b}\cap M:\QQ_{p^b} \cap K]\end{aligned}$$
%is an integer, it follows that
$$p \nmid [(\QQ_{p^b}\cap \QQ_p(\chi)):(\QQ_{p^b} \cap \QQ_p(\chi) \cap \QQ_p(\theta))],$$
and hence
%But  $\QQ_{p^b} \cap \QQ_p(\chi) \cap \QQ_p(\theta) \supseteq \QQ_p$ and $[\QQ_{p^b}:\QQ_p]= p^{b-1}$, so
%we have shown that
$$\QQ_{p^b}\cap \QQ_p(\chi) \subseteq \QQ_{p^b} \cap \QQ_p(\theta).$$
Denoting
$p^c:= [\QQ_{p^b}:(\QQ_{p^b}\cap \QQ_p(\chi))]$ and $p^d:= [\QQ_{p^b}:(\QQ_{p^b} \cap \QQ_p(\theta))]$,
we now have
\begin{equation}\label{ext22}
  c \geq d.
\end{equation}
By hypothesis, $p^c|\chi(1)$.  Next, $p \nmid |G/N|$ implies that $p \nmid \chi(1)/\theta(1)$, and
so $p^c|\theta(1)$. Hence $p^d|\theta(1)$ by \eqref{ext22}, and we are done.
\end{proof}

\begin{thm}\label{main-slu}
The following statements hold.
\begin{enumerate}[\rm(i)]
\item Suppose that $S = \SL_n(q)$ and $p \nmid \gcd(n,q-1)$. Then
Theorem \ref{thm:main-evidence} at the prime $p$ holds for $S = \SL_n(q)$ and for any finite group
$L \rhd S$ with $p \nmid |L/S|$.
\item Suppose that $S = \SU_n(q)$ and $p \nmid \gcd(n,q+1)$. Then
Theorem \ref{thm:main-evidence} at the prime $p$ holds for $S = \SU_n(q)$ and for any finite group
$L \rhd S$ with $p \nmid |L/S|$.
\end{enumerate}
\end{thm}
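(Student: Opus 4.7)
My plan is to combine Theorem \ref{main-gl} (which handles $G = \GL_n(q)$ and $\GU_n(q)$) with the transfer result Theorem \ref{thm:ext}, using an intermediate group. The claim about $p'$-extensions $L \rhd S$ with $p \nmid |L/S|$ will follow at once from Theorem \ref{thm:ext}(i) once the conjecture is established for $S$ itself, so I will concentrate on $S = \SL_n(q)$; the $\SU_n(q)$ case is entirely analogous, with $q-1$ replaced by $q+1$ throughout.

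First, I will introduce the intermediate group. Set $Z := Z(G)$, cyclic of order $q-1$, and $\tilde S := SZ$. Then $\tilde S \lhd G$ with $[G : \tilde S] = \gcd(n, q-1)$ via the determinant map, which is coprime to $p$ by hypothesis. Since $G/\tilde S$ is cyclic (hence solvable), and the conjecture holds for $G$ by Theorem \ref{main-gl}, Theorem \ref{thm:ext}(ii) applied to $G \rhd \tilde S$ yields the conjecture for $\tilde S$.

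The main work will then be to transfer the conjecture from $\tilde S$ down to $S$ — a step where Theorem \ref{thm:ext}(ii) does not apply directly, because $[\tilde S : S] = (q-1)/\gcd(n,q-1)$ can be divisible by arbitrarily large powers of $p$. Given $\chi \in \Irr(S)$ with $a := \nu_p(c(\chi)) \geq 1$, I will construct an explicit extension $\psi \in \Irr(\tilde S)$ with $\psi|_S = \chi$ as follows. The central character $\omega_\chi$ of $\chi$ on $Z(S) = Z \cap S$ has order dividing $|Z \cap S| = \gcd(n,q-1)$, hence of $p'$-order. Because $Z$ is cyclic and $\gcd(n,q-1)$ is coprime to $(q-1)_p$, a Chinese Remainder argument produces a linear extension $\lambda$ of $\omega_\chi$ from $Z \cap S$ to all of $Z$ that has $p'$-order. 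Setting $\psi(sz) := \chi(s)\lambda(z)$ for $s \in S$ and $z \in Z$ gives a well-defined irreducible character of $\tilde S$ of degree $\chi(1)$, with $\QQ(\psi) = \QQ(\chi)\QQ(\lambda)$ and $\QQ(\lambda) \subseteq \QQ_{(q-1)_{p'}}$.

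The conclusion will come from a linear-disjointness calculation. The extension $\QQ_{p^a}(\chi)/\QQ_p(\chi)$ has $p$-power degree, whereas $\QQ(\chi)\QQ(\lambda)/\QQ(\chi)$ has $p'$-degree. These degrees being coprime, the two extensions are linearly disjoint inside the abelian extension of $\QQ$ they generate, and adjoining $\lambda$ preserves both the $p$-adic conductor and the cyclotomic extension degree of interest:
\[
\nu_p(c(\psi)) = a \quad \text{and} \quad [\QQ_{p^a}(\psi) : \QQ_p(\psi)] = [\QQ_{p^a}(\chi) : \QQ_p(\chi)].
\]
Since $\psi(1) = \chi(1)$ and the conjecture holds for $\tilde S$, the left-hand side divides $\chi(1)$, which finishes the argument. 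The delicate point — and where the coprimality hypothesis $p \nmid \gcd(n, q \mp 1)$ is used essentially — is the combination of producing a $p'$-order extension $\lambda$ and the ensuing linear-disjointness check that underpins the last display.
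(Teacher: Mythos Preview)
Your overall strategy matches the paper's: pass from $\GL_n(q)$ to the intermediate group $\tilde S = S\cdot Z$ via Theorem~\ref{thm:ext}(ii), then from $\tilde S$ down to $S$. The first step is fine and identical to the paper's. The gap is in the second step.

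Your assertion that ``$\QQ(\chi)\QQ(\lambda)/\QQ(\chi)$ has $p'$-degree'' is not justified, and is false in general. The linear character $\lambda$ having $p'$-order only forces $\QQ(\lambda)=\QQ_m$ with $p\nmid m$; it says nothing about $[\QQ_m:\QQ_m\cap\QQ(\chi)]$, which can certainly be divisible by~$p$ (e.g.\ $p=3$, $m=7$ and $\QQ(\chi)\cap\QQ_7=\QQ$ gives degree~$6$). One can then produce fields $\QQ(\chi)$ with $a=\nu_p(c(\chi))\geq 2$ for which $\QQ_{p^a}\cap\QQ_p(\chi)\subsetneq \QQ_{p^a}\cap\QQ_p(\chi)\QQ_m$, so that $[\QQ_{p^a}(\psi):\QQ_p(\psi)]$ is \emph{strictly smaller} than $[\QQ_{p^a}(\chi):\QQ_p(\chi)]$. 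Since the conjecture for $\tilde S$ only controls the former quantity, your argument does not conclude. Note that the easy inclusion $\QQ(\chi)\subseteq\QQ(\psi)$ always gives $[\QQ_{p^a}(\psi):\QQ_p(\psi)]\mid[\QQ_{p^a}(\chi):\QQ_p(\chi)]$, which is the wrong direction.

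The paper avoids this entirely by observing that, under the hypothesis $p\nmid\gcd(n,q-1)$, the group $\tilde S$ splits as a \emph{direct product} $\OB_{p'}(Z)S\times\OB_p(Z)$. The conjecture then passes to the direct factor $\OB_{p'}(Z)S$ for free (tensor with $1$), and since $\OB_{p'}(Z)S/S$ is a cyclic $p'$-group, Theorem~\ref{thm:ext}(ii) applies directly to descend to~$S$. Your construction is morally the same---choosing $\lambda$ trivial on $\OB_p(Z)$ amounts to working in this direct factor---but to finish you must either use this decomposition explicitly, or invoke Proposition~\ref{pro:ext1}(ii) to select a good extension rather than an arbitrary one.
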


\begin{proof}
(i)
%Suppose that $S = \SL_n(q)$ and $p \nmid \gcd(n,q-1)$.
Embed $S$ in $G:=\GL_n(q)$, and consider
$H:=\ZB(G)S$. Since $G/H \cong \sC_{\gcd(n,q-1)}$, by Theorems \ref{main-gl} and \ref{thm:ext}(ii),
Conjecture \ref{conj:main} holds for $H$ at $p$.
Note that
$$H = \OB_{p'}(\ZB(G))S \times \OB_p(\ZB(G))$$
is a direct product. Indeed, suppose that $\alpha\cdot\mathrm{Id} = \beta g$
where $\alpha\cdot\mathrm{Id} \in \OB_{p}(\ZB(G))$, $\beta\cdot\mathrm{Id} \in \OB_{p'}(\ZB(G))$, and
$g \in S$. So $\alpha \in \FF_q^\times$ is a $p$-element and $\beta \in \FF_q^\times$ is a $p'$-element.
On the other hand,
$$\alpha^n = \det{\alpha\cdot\mathrm{Id}} = \det{\beta g} = \beta^n,$$
so $\alpha^n$ is both a $p$-element and a $p'$-element, whence $\alpha^n=1$.
But $\alpha^{q-1} = 1$, so the order of the $p$-element $\alpha$ divides $\gcd(n,q-1)$.
As $p \nmid \gcd(n,q-1)$, we conclude $\alpha=1$.

As $\OB_{p'}(\ZB(G))S$ is a direct factor of $H$, Conjecture
\ref{conj:main} holds for $\OB_{p'}(\ZB(G))S$. Since $S$ is a
normal subgroup of $\OB_{p'}(\ZB(G))S$ with quotient a cyclic
$p'$-group, so Conjecture \ref{conj:main} holds for $S$ at $p$ by Theorem
\ref{thm:ext}(ii). By Theorem \ref{thm:ext}(i), Conjecture \ref{conj:main} then holds for $L$ at $p$.

\smallskip
(ii) The arguments are entirely similar to the ones in (i).
\end{proof}

\begin{proof}[Proof of Theorem~\ref{thm:main-evidence}]
It follows from
Theorems~\ref{thmD-solvable}, \ref{thmD-alternating},
\ref{main-gl}, and \ref{main-slu}.
\end{proof}

%%%%%%%%%%%%%%%%%%%%%%%%%%%%%%%%%%%
%%%%%%%%%%%%%%%%%%%%%%%%%%%%%%%%%%%

\section{Theorem \ref{thm:main-p'-groups}}\label{sec:4}

For the definition of the \emph{inductive Feit condition}, we refer
the reader to \cite[\S3]{BKNT25}.

\begin{thm}\label{A}
Let $G$ be a finite group and $\chi\in\Irr(G)$ be non-linear. Assume
that simple groups involved in $G$ satisfy the inductive Feit
condition. Then there exist $H<G$ and $\psi\in\Irr(H)$ such that
$\QQ(\chi)\subseteq \QQ(\psi)$ and $[\QQ(\psi):\QQ(\chi)]$ divides
$|G:H|$.
\end{thm}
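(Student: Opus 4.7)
The plan is to argue by induction on $|G|$, reducing in stages to a case where the inductive Feit condition applies directly.

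First I would handle the situation in which $\chi$ is not quasi-primitive. If there is a normal subgroup $N\trianglelefteq G$ and an irreducible constituent $\theta$ of $\chi_N$ whose inertia group $V:=I_G(\theta)$ is proper in $G$, let $\psi\in\Irr(V|\theta)$ be the Clifford correspondent of $\chi$. Lemma~\ref{lem:Cram} then yields $\QQ(\chi)\subseteq\QQ(\psi)$ with $[\QQ(\psi):\QQ(\chi)]$ dividing $\chi(1)/\psi(1)=|G:V|$, so $(H,\psi):=(V,\psi)$ satisfies the conclusion. Similarly, if $\ker\chi\neq 1$, the induction hypothesis applied to $\chi$ viewed as a faithful character of $G/\ker\chi$ produces the required pair, which lifts back to $G$ (the lift remains a proper subgroup because $\ker\chi$ is contained in it).

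After these reductions one may assume that $\chi$ is faithful and quasi-primitive. At this stage I would invoke the hypothesis that the simple groups involved in $G$ satisfy the inductive Feit condition. The point of \cite{BKNT25} is to extract from this a Galois-equivariant version of Feit's conjecture for $G$: there exists an element $g\in G$ of order $c(\chi)$ whose $G$-conjugacy class interacts with $\Gal(\QQ_{c(\chi)}/\QQ(\chi))$ in such a way that $[\QQ_{c(\chi)}:\QQ(\chi)]$ divides $|G:\langle g\rangle|$. Setting $H:=\langle g\rangle$---a proper subgroup of $G$, since non-linearity of $\chi$ rules out $G$ being cyclic---and choosing $\psi$ to be a faithful linear character of the cyclic group $H$, we obtain $\QQ(\psi)=\QQ_{c(\chi)}\supseteq\QQ(\chi)$, and the desired divisibility follows at once.

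The principal obstacle lies in this last step. The inductive Feit condition is formulated in \cite[\S3]{BKNT25} as a condition on finite simple groups, and the passage from Feit's original statement (mere existence of an element of order $c(\chi)$) to the Galois-equivariant divisibility of the index $|G:\langle g\rangle|$ is the technical heart of the argument. One must verify both that the Clifford and kernel reductions above preserve the inductive hypothesis on the simple groups involved in $G$, and that the precise content of the inductive condition is strong enough to deliver the extra divisibility rather than only the existence of $g$; this is exactly where the transition from Feit's conjecture to Theorem~\ref{thm:main-p'-groups}(a) is made.
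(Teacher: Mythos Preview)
Your Clifford-correspondent reduction to the quasi-primitive case is exactly what the paper does, and the kernel reduction is harmless (though unnecessary). The gap is in your quasi-primitive endgame, and it is not merely a technical obstacle but a misreading of what \cite{BKNT25} supplies.

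What \cite[Theorem~4.7]{BKNT25} actually gives, for a quasi-primitive non-linear $\chi$ under the inductive Feit hypothesis, is a proper subgroup $H<G$ and $\psi\in\Irr(H)$ with $\QQ(\psi)=\QQ(\chi)$ \emph{exactly}. Thus $[\QQ(\psi):\QQ(\chi)]=1$ and the divisibility by $|G:H|$ is automatic; the paper's proof of the quasi-primitive case is a one-line citation. Your proposed mechanism---taking $H=\langle g\rangle$ cyclic of order $c(\chi)$ and $\psi$ faithful linear---requires instead that $[\QQ_{c(\chi)}:\QQ(\chi)]$ divide $|G|/c(\chi)$, i.e.\ that $c(\chi)[\QQ_{c(\chi)}:\QQ(\chi)]$ divide $|G|$. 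But this is precisely the content of Theorem~\ref{thm:main-p'-groups}(a), which the paper proves \emph{as a corollary of} Theorem~\ref{A} via induction on $|G|$. So your route inverts the logical dependence: you are asking the inductive Feit condition to deliver, in the base case, the very consequence the paper is building toward. The ``Galois-equivariant divisibility of the index'' you posit is not part of what \cite{BKNT25} establishes; the relevant output there is the equality of fields $\QQ(\psi)=\QQ(\chi)$ on a proper subgroup, which sidesteps any index computation.
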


\begin{proof} Suppose that $N\nor G$, $\theta \in \irr N$ is an irreducible constituent of $\chi_N$
and let $\eta \in \irr{T|\theta}$ be the Clifford correspondent of
$\chi$ over $\theta$. By Lemma~\ref{lem:Cram}, we have that
$[\Q(\eta):\Q(\chi)]$ divides $|G:T|$. If $T<G$, then we are done.
Hence, we may assume that $\chi$ is quasi-primitive. By
\cite[Theorem 4.7]{BKNT25}, there exists $H<G$ such that
$\Q(\chi)=\Q(\psi)$, and we are done. (Notice that this theorem is
stated for primitive characters, but as written after, it also holds
for quasi-primitive characters.)
\end{proof}

The following is Theorem~\ref{thm:main-p'-groups}.

\begin{cor}
Suppose that the finite simple groups satisfy the inductive Feit
condition.
 Then the following hold:
 \begin{enumerate}[\rm(a)]
 \item If $G$ is a finite group and  $\chi \in \irr G$, then $c(\chi)[\Q_{c(\chi)} : \Q(\chi)]$ divides $|G|$.

 \item Let $\FF/\QQ$ be an abelian extension, let $p$ be a prime, and $b\in\ZZ_{\ge 0}$.
 Then $\FF=\Q(\chi)$ for some $\chi \in \irr G$, where $G$ is a finite group with $\nu_p(|G|)=b$, if and only if
\[\nu_p(c(\FF)[\QQ_{c(\FF)}:\FF])\le b.\]
 \end{enumerate}
\end{cor}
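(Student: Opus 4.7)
The plan is to prove (a) by induction on $|G|$, using Theorem~\ref{A} as the inductive tool, and then to derive (b) by combining (a) with an explicit monomial construction of Ford--Giordano type.

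For the base case of (a), if $\chi$ is linear then $\QQ(\chi)=\QQ_{o(\chi)}=\QQ_{c(\chi)}$ as fields, so $[\QQ_{c(\chi)}:\QQ(\chi)]=1$ and the claim reduces to $c(\chi)\mid o(\chi)\mid |G|$. For the inductive step with $\chi$ non-linear, Theorem~\ref{A} yields a proper subgroup $H<G$ and $\psi\in\Irr(H)$ with $\QQ(\chi)\subseteq\QQ(\psi)$ and $[\QQ(\psi):\QQ(\chi)]$ dividing $|G:H|$. Writing $n:=c(\chi)$ and $m:=c(\psi)$, the minimality of the conductor gives $n\mid m$, and the tower $\QQ(\chi)\subseteq\QQ_n\subseteq\QQ_m$ gives $[\QQ_n:\QQ(\chi)]\mid[\QQ_m:\QQ(\chi)]=[\QQ_m:\QQ(\psi)]\cdot[\QQ(\psi):\QQ(\chi)]$. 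Multiplying the two divisibilities,
\[
n\,[\QQ_n:\QQ(\chi)]\ \mid\ m\,[\QQ_m:\QQ(\psi)]\cdot[\QQ(\psi):\QQ(\chi)],
\]
and the right-hand side divides $|H|\cdot|G:H|=|G|$ by the inductive hypothesis and Theorem~\ref{A}.

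For (b), the `only if' direction is immediate from (a) applied at the prime $p$. For the `if' direction, I will exhibit a finite group $G_0$ together with $\chi_0\in\Irr(G_0)$ satisfying $\QQ(\chi_0)=\FF$ and $|G_0|=c(\FF)[\QQ_{c(\FF)}:\FF]$ exactly, so that $\nu_p(|G_0|)=\nu_p(c(\FF)[\QQ_{c(\FF)}:\FF])=:b_0$. Setting $n:=c(\FF)$, $A:=\langle x\rangle\cong\sC_n$, and $\Gamma:=\Gal(\QQ_n/\FF)$ acting on $A$ through the canonical embedding $\Gamma\hookrightarrow(\ZZ/n\ZZ)^\times\cong\Aut(A)$, let $G_0:=A\rtimes\Gamma$. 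A faithful linear character $\lambda\colon x\mapsto\zeta_n$ has stabilizer $A$ in $G_0$ since $\Gamma$ acts faithfully on $\Irr(A)$, so $\chi_0:=\lambda^{G_0}\in\Irr(G_0)$; the standard computation $\chi_0(x^j)=\sum_{\tau\in\Gamma}\tau(\zeta_n^j)\in\FF$, together with the surjectivity of $\Tr_{\QQ_n/\FF}$ on the basis $\{\zeta_n^j\}$, gives $\QQ(\chi_0)=\FF$, while $|G_0|=n|\Gamma|=c(\FF)[\QQ_{c(\FF)}:\FF]$ yields $\nu_p(|G_0|)=b_0$. For any $b\ge b_0$, taking $G:=G_0\times C$ with $C$ cyclic of order $p^{b-b_0}$ and $\chi:=\chi_0\otimes 1_C\in\Irr(G)$ gives $\QQ(\chi)=\FF$ and $\nu_p(|G|)=b$ as required.

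The only substantive ingredient is Theorem~\ref{A}, which itself packages the inductive Feit condition through \cite[Theorem~4.7]{BKNT25}. Everything else in the argument is elementary cyclotomic arithmetic for (a) and a Ford--Giordano-style monomial construction for (b), so I anticipate no further obstacle once Theorem~\ref{A} is in hand.
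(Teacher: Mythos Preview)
Your proof is correct and follows essentially the same approach as the paper: induction on $|G|$ via Theorem~\ref{A} for (a), then (a) plus the Ford--Giordano monomial construction for (b). Your tower argument in (a) is a slight streamlining of the paper's use of the intermediate field $K=\QQ_{c(\chi)}\cap\QQ(\psi)$, and in (b) you write out explicitly the semidirect-product group that the paper obtains by citing Lemma~\ref{FG72}.
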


\begin{proof}
We argue by induction on $|G|$. If $\chi$ is linear, then
$c(\chi)[\Q_{c(\chi)} : \Q(\chi)]=c(\chi)$ divides the exponent of
$G$, and we are done. We may assume that $\chi$ is not linear. By
Theorem \ref{A}, there exists $H<G$ and $\psi\in\Irr(H)$ such that
$\QQ(\chi)\subseteq \QQ(\psi)$ and $[\QQ(\psi):\QQ(\chi)]$ divides
$|G:H|$. By induction, we have that $c(\psi)[\Q_{c(\psi)} :
\Q(\psi)]$ divides $|H|$. Since $\Q(\chi) \sbs \Q(\psi)$, we have
that $c(\chi)$ divides $c(\psi)$. Let $K=\Q_{c(\chi)} \cap
\Q(\psi)$. By elementary Galois theory, $c(\chi)[\Q_{c(\chi)}:K]$
divides $c(\psi)[\Q_{c(\psi)}:\Q(\psi)]$, which divides $|H|$. Also,
$[K:\Q(\chi)]$ divides $[\Q(\psi):\Q(\chi)]$, which divides $|G:H|$,
and part (a) follows.

Now, let $\FF/\QQ$ be an abelian extension, let $p$ be a prime, and
$b\in\ZZ_{\ge 0}$ such that $\nu_p(c(\FF)[\QQ_{c(\FF)}:\FF])\le b$.
By Lemma~\ref{FG72}, there exists a group $M$ of order
$c(\FF)[\QQ_{c(\FF)}:\FF]$ and $\varphi\in\Irr(M)$ such that
$\QQ(\varphi)=F$. The direct product $G:=M\times C$, where $C$ is
the cyclic $p$-group of order $p^b/(c(\FF)_p[\QQ_{c(\FF)}:\FF]]_p)$,
and the character $\chi:=\varphi\otimes \textbf{1}_C\in\Irr(G)$
satisfy the required condition. Conversely, assume that $G$ is a
finite group such that $|G|_p= p^b$ and $\chi \in \irr G$. By part
(a), we have that $c(\chi)[\Q_{c(\chi)}:\Q(\chi)]$ divides $|G|$,
which implies that $c(\chi)_p[\Q_{c(\chi)}:\Q(\chi)]_p\le
|G|_p=p^b$, and we are done.
\end{proof}

For solvable groups, it was proven in \cite[Theorem~0.1]{Cram88}
that in fact $[\QQ_{c(\chi)}:\QQ(\chi)]$ divides $\chi(1)$, but this
is not true in general (as shown, for instance, by $\Alt_5$ and the irreducible characters of degree 3).

%%%%%%%%%%%%%%%%%%%%%%%%%%%%%%%%%%%%%%%%%%%%%%%%%%%%%%%%%%%%%%%%%%%%%%%%%%%%%%%

\end{document}